\newtheorem{thm}{Theorem}[section]
\newtheorem{cor}[thm]{Corollary}
\newtheorem{lem}[thm]{Lemma}
\newtheorem{prop}[thm]{Proposition}
\newtheorem{rem}[thm]{\bf{Remark}}
\newtheorem{quest}[thm]{Question}
\newtheorem{conjecture}[thm]{Conjecture}
\newcommand{\fit}{{\rm{Fit}}}
\newcommand{\aut}{{\rm{Aut}}}
\newcommand{\eng}{\mathord{\mathrm{eng}}}
\newcommand{\nil}{\mathord{\mathrm{nil}}}
\numberwithin{equation}{section}
\def\pn{\par\noindent}
\begin{document}


\title{Group nilpotency from a graph point of view}
\author{Valentina Grazian, Andrea Lucchini and Carmine Monetta}

\thanks{{\scriptsize
\hskip -0.4 true cm MSC(2020): Primary: 20D15; Secondary: 05C25.
\newline Keywords: graph isomorphism; nilpotency.\\}}
\maketitle

\begin{abstract}  
Let $\Gamma_G$ denote a graph associated with a group $G$. A compelling question about finite groups asks whether or not a finite group $H$ must be nilpotent provided $\Gamma_H$ is isomorphic to $\Gamma_G$ for a finite nilpotent group $G$. In the present work we analyze the problem for different graphs that one can associate with a finite group, both reporting on existing answers and contributing to new ones.
\end{abstract}

\vskip 0.2 true cm


\pagestyle{myheadings}
\markboth{\rightline {\sl Int. J. Group Theory x no. x (201x) xx-xx \hskip 7 cm  V. Grazian, A. Lucchini and C. Monetta}}
         {\leftline{\sl Int. J. Group Theory x no. x (201x) xx-xx \hskip 7 cm  V. Grazian, A. Lucchini and C. Monetta}}

\bigskip


\section{\bf Introduction}

Given a finite group $G$, one can consider a graph $\Gamma_G$ associated with $G$ which encodes certain group properties of $G$. Such an approach has been extensively studied in the last decades, mainly for two reasons. The former is to determine structure description of $G$ investigating the invariants of $\Gamma_G$, while the latter aims to produce graphs fitting specific features (see for instance \cite{ADM, az, cam, GM, LM, Lucc.Nem.}).

A natural question in this research line is to understand if a graph isomorphism - which is clearly a weaker relation than a group isomorphism - may or may not preserve specific properties of a group.  More precisely, we are interested in the following question. 

\begin{quest}\label{q1}
If $G$ and $H$ are finite groups with isomorphic graphs $\Gamma_G \cong \Gamma_H$ and $G$ is nilpotent, is it true that $H$ is nilpotent as well?
\end{quest}

Of course the hardness of the problem, as well as the answer, change depending on the graph choice. For instance we may easily produce a negative answer to Question \ref{q1} considering the {\it soluble graph} of a group $G$, that is the graph whose set of vertices is $G$ and in which two vertices are adjacent if they generate a soluble group. Indeed, both the soluble graph of a nilpotent group, and the soluble graph of a soluble group are complete, thus in this case Question \ref{q1} has negative answer whenever we pick a nilpotent group $G$ and a non-nilpotent soluble group $H$ having the same order: the smallest example is given by the cyclic group of order $6$ and the symmetric group of degree $3$.

However, our report on this problem will clarify that the situation is not so easy in general and that in some cases Question \ref{q1} remains still open.

In the following we will analyze the distinct circumstances corresponding to the following graph choices: the non-commuting graph, the power graph, the prime graph (also known as Gruenberg-Kegel graph), the generating %
graph, the Engel graph and the join graph; our study is summarized in Table \ref{table}.

\begin{center}
\begin{table}[H]\label{table} 
\begin{tabular}{ |c|c|c|m{3cm}| }
 \hline

 & Answer to  Q \ref{q1} & Positive answer if & Examples with $H$ non-nilpotent   \\
 \hline
 
 Power graph & Yes (Thm \ref{power.YES}) & & \\
\hline

Engel graph & Yes (Prop \ref{zorn}) & & \\ 
\hline

Non-commuting graph & Open & \thead{$|G| = |H|$ (Thm \ref{nc.same.order}) 
\\
or
\\
$G$ is an AC-group and $|Z(G)|\geq|Z(H)|$ \\ (Thm \ref{nc.AC.groups})} &\\
\hline

Generating graph & Open & \thead{$H$ is supersoluble (Thm \ref{super})
\\
or
\\
The subgraph of the non-generating graph \\obtained  by removing 
all universal vertices \\ is disconnected 
(Thm \ref{q1.ng.disconnected})} & \\
\hline


Prime graph & No &  \thead{$|H|$ is square-free (Prop \ref{prime.squarefree})} & $G\cong C_6 \times C_6$ and $H\cong S_3 \times C_6$ \\
\hline

Join graph & No & 
& $G\cong C_p \times C_p$ and $H\cong D_{2p}$, for $p$ an odd prime\\
 \hline

\end{tabular}
\medskip
\caption{Answers to Question \ref{q1} depending on the graphs.}
\end{table}
\end{center}

\vspace{-1.2cm}
We will mainly adopt standard terminology in graph theory and known notation in group theory. If $G$ is a finite group, $|G|$ denotes the order of $G$, $Z(G)$ denotes the center of $G$, $\fit(G)$ stands for the Fitting subgroup of $G$ and $\Phi(G)$ denotes the Frattini subgroup of $G$. Also, we will write $C_n$, $S_n$, $D_{2n}$ and $Q_n$ for the cyclic group of order $n$, the symmetric group of degree $n$, the dihiedral group of order $2n$ and the quaternion group of order $n$, respectively. 



 \section{The power graph and the enhanced power graph} 
 The power graph of a semigroup was first introduced by  Chakrabarty et al. \cite{Ch.power}, taking inspiration from the definition of directed power graph given by Kelarev and Quinn \cite{KQ.power}.
 The power graph of a finite group 
$G$ is a graph whose vertices are the elements of $G$ and in which two distinct vertices $x$ and $y$ are adjacent if there exists $k\geq 2$ such that $x^k = y$ or $y^k = x$. The enhanced power graph of $G$, also known as cyclic graph of $G$, is the graph with vertex set $G$ in which distinct elements $x$ and $y$ are joined by an edge if and only if $\langle x,y\rangle$ is cyclic. Note that the power graph is a subgraph of the enhanced power graph, and the two graphs coincide if and only if any element of the group has prime power order (see Theorem 28 of \cite{AACNS}). 
However, Question \ref{q1} is equivalent for power graphs and enhanced power graphs, thanks to the following result:

\begin{thm}\cite[Corollary 3.1]{ZBM}
Two finite groups $G$ and $H$ have isomorphic power graph if and only if they have isomorphic enhanced power graph.
\end{thm}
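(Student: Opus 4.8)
The plan is to prove both implications at once by showing that each of the two undirected graphs determines, and is determined by, the \emph{directed} power graph of the group, whose vertex set is $G$ and in which there is an arc from $x$ to $y$ exactly when $y$ is a power of $x$. From this digraph one recovers $\langle x\rangle$ as the set of out-neighbours of $x$ together with $x$ itself; and since $x$ and $y$ are adjacent in the enhanced power graph precisely when they lie in a common cyclic subgroup $\langle z\rangle$ --- that is, when some vertex $z$ has both $x$ and $y$ among its powers --- the directed power graph visibly determines the enhanced power graph. It equally determines the ordinary power graph, whose edges are obtained by forgetting the orientation of the arcs. Thus it suffices to reconstruct the directed power graph, up to isomorphism, from each of the two undirected graphs by a procedure using only their abstract graph structure: an isomorphism of power graphs (resp.\ enhanced power graphs) then transports to an isomorphism of directed power graphs, and from there back to an isomorphism of the other undirected graph.

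For the power graph this reconstruction is a theorem of Cameron: isomorphic power graphs force isomorphic directed power graphs. The mechanism is the closed-twin relation $x\equiv y\iff N[x]=N[y]$, whose classes are cliques collecting the generators of a common cyclic subgroup, refined by the containment pattern among closed neighbourhoods, which recovers the ``is a power of'' relation up to the unavoidable symmetries. I would invoke this directly to pass from $\Gamma_G\cong\Gamma_H$ for the power graphs to isomorphic directed power graphs, and hence to isomorphic enhanced power graphs.

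For the enhanced power graph I would argue dually. Its maximal cliques are exactly the maximal cyclic subgroups of $G$, so from the graph one reads off these subgroups as vertex sets, together with their sizes (hence orders) and their mutual intersections. The internal power structure of a cyclic group of given order is fixed once the order is known, so reconstructing the orientation reduces to determining the order of each vertex --- equivalently, locating each $x$ inside the cyclic subgroup it generates --- from the incidence pattern of the maximal cliques and their overlaps, after which the arcs, and with them the power graph, are forced.

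The main obstacle, in both directions, is precisely the recovery of orientation from undirected data. In the power-graph direction the difficulty is concentrated in the ``dominant'' vertices: the identity, and, when $G$ is cyclic, the generators of $G$, all share the same closed neighbourhood --- already in $C_6$ the vertices $e$, $a$ and $a^5$ are mutually closed twins --- so disentangling genuine power relations from this collapse requires a careful case analysis. In the enhanced-power direction the obstruction is sharper, since a cyclic subgroup induces a complete subgraph that by itself hides the orders of its elements; these orders must instead be teased out from the global pattern of maximal-clique memberships and intersection sizes. Making the reconstruction both unambiguous and manifestly isomorphism-invariant is the technical heart of the argument of \cite{ZBM}.
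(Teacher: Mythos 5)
The paper contains no proof of this statement: it is imported verbatim from \cite{ZBM}, so the only meaningful comparison is with the proof in that reference. Your architecture coincides with it: both undirected graphs are played off against the directed power graph, Cameron's theorem \cite{cameron} supplies the implication from isomorphic power graphs to isomorphic directed power graphs, and your opening observations — that forgetting orientation recovers the power graph, and that $x$ and $y$ are adjacent in the enhanced power graph exactly when some $z$ has both $x$ and $y$ among its powers — are correct and complete. Invoking Cameron's result is also legitimate, since it is an earlier, independent theorem.

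The genuine gap is the remaining arrow: that isomorphic enhanced power graphs force isomorphic directed power graphs. This implication is the entire mathematical content of the theorem in \cite{ZBM} of which the statement is a corollary, and you do not prove it; you sketch a plan and then explicitly defer its ``technical heart'' to \cite{ZBM} itself, which is circular for a blind proof of that paper's corollary. Moreover, the plan as you state it — determine ``the order of each vertex'' from the incidence pattern of maximal cliques and their overlaps — cannot work vertex-by-vertex: for a cyclic group of order $n>1$ the enhanced power graph is the complete graph $K_n$, there is a single maximal clique, and every vertex has identical membership and intersection data, although the orders of the corresponding elements range over all divisors of $n$; the same collapse occurs inside any maximal cyclic subgroup of a general group, where the identity and the generators are indistinguishable by this data. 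What is actually true, and what requires the real argument, is the up-to-isomorphism statement: the abstract enhanced power graph determines the directed power graph up to isomorphism, i.e.\ \emph{some} isomorphism compatible with the power relation exists even though no canonical vertex-by-vertex reconstruction does. Supplying that argument — in \cite{ZBM} it is a careful analysis of the maximal cliques (the maximal cyclic subgroups), their intersections, and a counting of elements of each order — is precisely what is missing from your proposal.
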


In \cite{CG}, Cameron proved that if $G$ and $H$ are finite abelian groups with isomorphic power graph, then they are isomorphic (see \cite[Theorem 1]{CG}). Then in \cite{cameron}, the author proved that groups with isomorphic power graph have the same numbers of elements of each order (see \cite[Corollary 3]{cameron}). Thanks to this result, Mirzargar and Scapellato managed to give a positive answer to Question \ref{q1} for power graphs:

\begin{thm}\cite[Corollary 3.2]{MS}\label{power.YES}
Let $G$ and $H$ be finite groups with isomorphic power graphs. If $G$ is nilpotent, then $H$ is nilpotent.   
 \end{thm}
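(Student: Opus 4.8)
The plan is to reduce everything to the result already cited, namely Cameron's theorem that groups with isomorphic power graphs have the same number of elements of each order, and then to characterise nilpotency purely in terms of these order statistics. First I would record the immediate consequences: summing the number of elements of each order gives $|G| = |H|$, so the two orders have identical prime factorisations and in particular the same $p$-part for every prime $p$.

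The heart of the argument is the following elementary characterisation: a finite group $K$ is nilpotent if and only if, for every prime $p$ dividing $|K|$, the number of elements of $K$ whose order is a power of $p$ equals the $p$-part of $|K|$. To prove the forward direction I would use that a nilpotent group is the internal direct product of its Sylow subgroups; an element then has $p$-power order precisely when all of its components outside the Sylow $p$-subgroup $P$ are trivial, so the $p$-elements are exactly the elements of $P$, numbering $|P|$, which is the $p$-part of $|K|$. For the converse, I would note that any single Sylow $p$-subgroup $P$ already contributes $|P|$ elements of $p$-power order, so the count of $p$-elements is always at least $|P|$; if it equals $|P|$, then every $p$-element lies in $P$, forcing $P$ to be the unique Sylow $p$-subgroup and hence normal. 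Carrying this out for all $p$ makes every Sylow subgroup normal, which is equivalent to nilpotency.

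With this characterisation in hand, the theorem follows quickly. The number of $p$-elements of a group is the sum, over all $j \geq 0$, of the number of elements of order $p^{j}$, and this quantity is preserved under power-graph isomorphism by Cameron's result. Since $G$ is nilpotent, for each prime $p$ this common count equals the $p$-part of $|G|$, which coincides with the $p$-part of $|H|$ because $|G| = |H|$. Applying the characterisation to $H$ then yields that $H$ is nilpotent.

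I would expect the only real content to be the characterisation lemma; the rest is bookkeeping with the invariance of the order statistics. The lemma itself is standard, so the main point to be careful about is the converse direction, specifically the observation that equality in the bound ``number of $p$-elements $\geq |P|$'' forces uniqueness, and hence normality, of the Sylow $p$-subgroup.
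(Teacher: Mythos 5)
Your proposal is correct and follows essentially the same route as the paper: both arguments invoke Cameron's result that a power-graph isomorphism preserves the number of elements of each order, count the elements of $p$-power order (which for nilpotent $G$ is exactly the $p$-part of $|G|$), and deduce that $H$ has a unique, hence normal, Sylow $p$-subgroup for every prime $p$. The only difference is that you isolate the counting argument as an explicit characterisation lemma, which the paper uses implicitly.
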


 \begin{proof}
Let $p$ be a prime dividing $|G|$, let $P$ be the unique Sylow $p$-subgroup of $G$ and set $|P|=p^n$. Since $G$ and $H$ have isomorphic power graph, we get $|G|=|H|$ and so the Sylow $p$-subgroups of $H$ must have order $p^n$. Also, $G$ has exactly $p^n$ elements of $p$-power order and by  \cite[Corollary 3]{cameron}, the graph isomorphism implies that $H$ has exactly $p^n$ elements of $p$-power order too. Thus we conclude that $H$ has a unique Sylow $p$-subgroup. As this holds for every prime divisor of $|H|$, we deduce that $H$ is nilpotent.
 \end{proof}

Unfortunately, the proof of the previous theorem does not allow to obtain an explicit criterion to detect the nilpotency of a finite group $G$ looking at the associated power graph. The situation is much better if one considers the directed power graph associated to $G$: in this case  the vertex set is again $G$ and there is an arc $y\mapsto x$ if $x$ is a power of $y$. In particular the number of vertices $y\neq x$ such that $x\mapsto y$ is $|x|-1$, so $|x|$ is determined by the graph. Thus just looking at the direct power graph of $G$ we may compute, for any prime divisor $p$ of $|G|$, the number of $p$-elements of $G$ and deduce immediately whether the Sylow $p$-subgroup is normal in $G$.


\section{The Engel graph}

Following a suggestion given by Cameron (see \cite[Section 11.1]{cam}), we may define a graph $\Gamma_{\eng}(G)$, where the vertices are the elements of $G$ and where two vertices are adjacent if they satisfy a suitable Engel relation; more precisely if $x$ and $y$ are different elements of $G,$ then there is an edge joining $x$ and $y$ if and only if either $[x,{}_ry]=1$ or $[y,{}_rx]=1$ for some $r\in \mathbb N$. Cameron proposes to call this graph the Engel graph of $G$, although, as he notices, the same term was used by 
Abdollahi \cite{abd} to denote a related but different graph. 	

It is easy to see that the answer to Question \ref{q1} is affirmative for Engel graphs:

\begin{prop}\label{zorn}
A finite group $G$ is nilpotent if and only if its Engel graph is complete. In particular, if $G$ and $H$ are finite groups with isomorphic Engel graphs and $G$ is nilpotent, then $H$ is nilpotent.
\end{prop}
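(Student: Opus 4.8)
The plan is to prove the sharper biconditional stated in the proposition, namely that $G$ is nilpotent if and only if $\Gamma_{\eng}(G)$ is complete, and then read off the isomorphism consequence at the end. Throughout it is convenient to say that $y$ \emph{absorbs} $x$ when $[x,{}_{r}y]=1$ for some $r\in\mathbb N$, so that two distinct vertices $x,y$ are adjacent precisely when one of them absorbs the other. The forward implication is the easy half: if $G$ is nilpotent of class $c$, then $[x,{}_{c}y]=1$ for \emph{every} pair $x,y$, so every pair of distinct elements is adjacent and $\Gamma_{\eng}(G)$ is complete. The content of the statement is therefore the converse, which I will prove in contrapositive form: if $G$ is not nilpotent then $\Gamma_{\eng}(G)$ has a non-edge.

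The first step toward the converse is a reduction. Since the iterated commutators $[x,{}_{r}y]$ are computed by a fixed word and do not depend on the ambient group, for any subgroup $H\leq G$ the graph $\Gamma_{\eng}(H)$ is exactly the subgraph of $\Gamma_{\eng}(G)$ induced on $H$. Consequently, to exhibit a non-edge in a non-nilpotent $G$ it suffices to exhibit one inside a single non-nilpotent subgroup. I will choose that subgroup to be a minimal non-nilpotent (Schmidt) subgroup $S$, which exists in any non-nilpotent finite group (take a non-nilpotent subgroup of least order). Recall that such an $S$ has the shape $S=P\rtimes\langle y\rangle$ with $P=\fit(S)$ a normal Sylow $p$-subgroup, $\langle y\rangle$ a Sylow $q$-subgroup, $p\neq q$, and $\langle y\rangle$ acting nontrivially and coprimely on $P$.

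The core of the argument is to produce the non-edge inside $S$ using a conjugate of $y$. First I would check that there exists $u\in P$ with $[y,y^{u}]\neq 1$: otherwise $y$ would commute with all of its $P$-conjugates, and conjugating this relation by elements of $P$ shows the $P$-conjugates of $y$ pairwise commute, so the normal closure $\langle y\rangle^{S}=\langle y^{u}:u\in P\rangle$ would be an abelian normal $q$-subgroup of $S$; but then $\langle y\rangle^{S}\leq O_{q}(S)\leq\fit(S)=P$, which is a $p$-group, forcing $y=1$, a contradiction. Fix such a $u$ and consider the pair $(y,y^{u})$, which are distinct since they do not commute. Writing $y^{u}=ya$ with $a\in P$, the identity $[ya,y]=[a,y]$ gives $[y^{u},y]=[a,y]\in[P,y]$, and this element is nontrivial because $[y^{u},y]=[y,y^{u}]^{-1}\neq1$. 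Now I invoke the coprime decomposition $P=C_{P}(y)\times[P,y]$, so that $[P,y]\cap C_{P}(y)=1$ and $[P,y]$ is $\langle y\rangle$-invariant. Hence the whole sequence $[y^{u},{}_{n}y]$ stays inside $[P,y]$, and a value $[w,{}_{k}y]=1$ with $w\in[P,y]$ would force $[w,{}_{k-1}y]\in[P,y]\cap C_{P}(y)=1$, and inductively $w=1$; since our sequence starts at the nontrivial element $[y^{u},y]\in[P,y]$, it never reaches $1$. Thus $y$ does not absorb $y^{u}$. The symmetric statement, that $y^{u}$ does not absorb $y$, follows by the same computation applied to the action of $y^{u}$ on $P$, which is conjugate in $\aut(P)$ to that of $y$ and hence equally coprime and fixed-point-trivial on $[P,y^{u}]$. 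Therefore $(y,y^{u})$ is a non-edge of $\Gamma_{\eng}(S)$, hence of $\Gamma_{\eng}(G)$, completing the contrapositive.

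The main obstacle is exactly the disjunction in the adjacency relation: completeness only guarantees, for each pair, that \emph{one} of the two Engel conditions holds, whereas nilpotency (via Zorn's theorem, that a finite Engel group is nilpotent) is naturally phrased through the two-sided condition. The whole difficulty is therefore concentrated in finding, inside a non-nilpotent group, a pair failing \emph{both} Engel conditions; the conjugate pair $(y,y^{u})$ in a Schmidt group is engineered precisely so that its commutator sequences get trapped in $[P,y]$ and can never vanish. Finally, the displayed ``in particular'' follows at once: if $\Gamma_{G}\cong\Gamma_{H}$ and $G$ is nilpotent, then $\Gamma_{G}$ is complete by the forward implication, so $\Gamma_{H}$ is complete as well, and the converse yields that $H$ is nilpotent.
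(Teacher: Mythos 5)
Your overall strategy---reduce to a minimal non-nilpotent (Schmidt) subgroup $S=P\rtimes\langle y\rangle$ and exhibit a non-edge between two conjugates of $y$---is sound, and you correctly isolate the real subtlety here: completeness of $\Gamma_{\eng}$ is a disjunctive condition, weaker on its face than the hypothesis of Zorn's theorem, which is exactly the point that the paper's two-line proof disposes of by citing \cite[12.3.4]{rob}. However, your key step is wrong as stated. The ``coprime decomposition'' $P=C_P(y)\times[P,y]$ is a theorem only for \emph{abelian} $P$; for non-abelian $P$ coprime action gives only $P=C_P(y)[P,y]$, and $C_P(y)\cap[P,y]$ can be nontrivial. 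This is not a hypothetical worry: in a Schmidt group one always has $[P,y]=P$ (since $P/\Phi(P)$ is a chief factor on which $y$ acts nontrivially, $[P,y]\Phi(P)=P$ and hence $[P,y]=P$), so your trivial-intersection claim would amount to $C_P(y)=1$, which fails, for example, in $S={\rm SL}(2,3)=Q_8\rtimes\langle y\rangle$ with $|y|=3$: there $[P,y]=Q_8$ while $C_P(y)=Z(Q_8)=\{\pm 1\}$. Consequently your induction ``$[w,{}_k y]=1$ with $w\in[P,y]$ forces $[w,{}_{k-1}y]\in[P,y]\cap C_P(y)=1$, hence $w=1$'' breaks down at $w=-1$, and the same defect undermines the symmetric half of the argument (the claim that $y^u$ is ``fixed-point-trivial on $[P,y^u]$''). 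A second, smaller slip: $\fit(S)=P$ is false when $|y|=q^m$ with $m\geq 2$, because then $y^q\in Z(S)$ and $\fit(S)=P\langle y^q\rangle$; the existence of $u$ with $[y,y^u]\neq 1$ is better argued by noting that otherwise $N=\langle y\rangle^S$ would be a normal $q$-subgroup with $[P,N]\leq P\cap N=1$, making $S=P\times\langle y\rangle$ nilpotent, a contradiction.

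The gap is reparable, and your choice of pair is the right one, but the computation must be run modulo $\Phi(P)$ rather than in $P$. In $\bar S=S/\Phi(P)$ the group $\bar P$ is elementary abelian and $\bar y$ still acts coprimely and nontrivially, so the decomposition $\bar P=C_{\bar P}(\bar y)\times[\bar P,\bar y]$ you wanted is genuinely available and the map $\bar w\mapsto[\bar w,\bar y]$ is injective on $[\bar P,\bar y]$; your argument, run verbatim in $\bar S$ (including the normal-closure argument producing $\bar u$ with $[\bar y,\bar y^{\bar u}]\neq 1$), then shows that $(\bar y,\bar y^{\bar u})$ is a non-edge of $\Gamma_{\eng}(\bar S)$. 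Since Engel relations pass to quotients ($[x,{}_r z]=1$ in $S$ implies the same relation between the images in $\bar S$), any preimages in $S$ of these two vertices form a non-edge of $\Gamma_{\eng}(S)$, hence of $\Gamma_{\eng}(G)$ by your induced-subgraph observation. With that correction you would have a complete, self-contained proof, genuinely different from the paper's, which handles the converse purely by citation; your route makes explicit the work that such a citation leaves implicit.
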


\begin{proof}
If $G$ is nilpotent, then every element of $G$ is a right Engel element. Hence the graph  $\Gamma_{\eng}(G)$ is complete. On the other hand, if the graph $\Gamma_{\eng}(G)$ is complete, then by \cite[12.3.4]{rob}, we conclude that $G$ is nilpotent.
\end{proof}

Cameron proposed to investigate the relation between the Engel graph and the Nilpotent graph (where the Nilpotent graph $\Gamma_{\nil}(G)$ has as vertices the elements of $G$ and $x$ and $y$ are adjacent if and only if $\langle x,y\rangle$ is nilpotent). In particular he asks (see \cite[Question 24]{cam}) for which groups $G$ the two graphs coincide. We answer to this question with the following result.
	
\begin{thm}
	Let $G$ be a finite group. Then  $\Gamma_{\eng}(G)=\Gamma_{\nil}(G)$ if and only if $G$ is nilpotent.
\end{thm}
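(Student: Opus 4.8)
The plan is to prove both implications, using Proposition \ref{zorn} for the forward direction and the structure of minimal non-nilpotent groups for the converse. Throughout I would exploit that, on the common vertex set $G$, the nilpotent graph is always a spanning subgraph of the Engel graph: if $\langle x,y\rangle$ is nilpotent of class $c$, then every element is a right Engel element of $\langle x,y\rangle$, so $[x,{}_cy]=1$, and hence $x,y$ are adjacent in $\Gamma_{\eng}(G)$ as well. Consequently the equality $\Gamma_{\eng}(G)=\Gamma_{\nil}(G)$ holds if and only if every edge of $\Gamma_{\eng}(G)$ is already an edge of $\Gamma_{\nil}(G)$.

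For the forward implication, suppose $G$ is nilpotent. Then every $2$-generated subgroup $\langle x,y\rangle$ is nilpotent, so $\Gamma_{\nil}(G)$ is complete; by Proposition \ref{zorn} the graph $\Gamma_{\eng}(G)$ is complete as well, and the two complete graphs on the vertex set $G$ coincide.

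For the converse I would argue by contraposition: assuming $G$ is not nilpotent, I exhibit an edge of $\Gamma_{\eng}(G)$ that is not an edge of $\Gamma_{\nil}(G)$. The key observation is that every element $y$ lying in a normal nilpotent subgroup $N$ of a group $H$ is a universal vertex of $\Gamma_{\eng}(H)$: since $N$ is normal we have $[x,y]\in N$ for all $x\in H$, and an easy induction gives $[x,{}_ky]\in\gamma_k(N)$, so that $[x,{}_{c+1}y]=1$ whenever $N$ has nilpotency class $c$. Since $G$ is not nilpotent it contains a minimal non-nilpotent subgroup $S$, which by Schmidt's classification has the form $S=P\rtimes\langle c\rangle$ with $P=O_p(S)=\fit(S)$ the normal Sylow $p$-subgroup and $\langle c\rangle$ a cyclic Sylow $q$-subgroup, for distinct primes $p,q$.

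Inside $S$ I would then locate the desired pair. If $\langle c,y\rangle$ were nilpotent for every $y\in P$, then $c$ would commute with each such $y$ (in a nilpotent $\{p,q\}$-group the Sylow subgroups centralize each other), so $c$ would centralize $P$ and $S=P\times\langle c\rangle$ would be nilpotent, a contradiction. Hence there exists $y\in P$ with $\langle c,y\rangle$ non-nilpotent, and necessarily $y\neq1$ and $c\neq y$. Because $P$ is a normal nilpotent subgroup of $S$ and $y\in P$, the observation above yields $[c,{}_ry]=1$ for some $r$, computed inside $S$ and therefore inside $G$; thus $c$ and $y$ are adjacent in $\Gamma_{\eng}(G)$. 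On the other hand $\langle c,y\rangle$ is not nilpotent, so $c$ and $y$ are not adjacent in $\Gamma_{\nil}(G)$, which produces the required edge and completes the contrapositive. I expect the converse to be the main obstacle, and specifically the need to realize a non-nilpotent $2$-generated subgroup with one generator lying in a normal nilpotent subgroup; the reduction to a Schmidt subgroup $S=P\rtimes\langle c\rangle$ is what makes this possible, since there $P=\fit(S)$ supplies universal Engel vertices while the non-normality of the Sylow $q$-subgroup guarantees a non-nilpotent pair.
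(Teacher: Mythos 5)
Your proof is correct, but it follows a genuinely different route from the paper's. The paper argues by induction on $|G|$: since the hypothesis $\Gamma_{\eng}(G)=\Gamma_{\nil}(G)$ is inherited by subgroups, all proper subgroups of $G$ are nilpotent, so $G$ is soluble; it then compares the \emph{universal vertices} of the two graphs, citing that these form the hypercenter for $\Gamma_{\nil}(G)$ and, by Baer's theorem, the Fitting subgroup for $\Gamma_{\eng}(G)$, and finally shows that a soluble group whose hypercenter equals its Fitting subgroup is nilpotent. You instead argue by contraposition with no induction and no appeal to either of those cited results: you pass to a Schmidt (minimal non-nilpotent) subgroup $S=P\rtimes\langle c\rangle$, replace Baer's theorem by the elementary computation $[x,{}_k y]\in\gamma_k(N)$ for $y$ in a normal nilpotent subgroup $N$, and exhibit a single explicit edge $\{c,y\}$ of $\Gamma_{\eng}(G)$ that is absent from $\Gamma_{\nil}(G)$. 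Your version is more self-contained and local (its only black box is Schmidt's structure theorem), while the paper's is more conceptual, identifying graph invariants (universal vertex sets) with canonical subgroups (hypercenter and Fitting subgroup), which has independent interest. One small inaccuracy, harmless to your argument: in a Schmidt group one need not have $P=\fit(S)$; e.g.\ for $S=C_3\rtimes C_4$ one has $P=C_3$ but $\fit(S)\cong C_6$, since the Sylow $q$-subgroup $\langle c\rangle$ may meet $Z(S)$ nontrivially. Your proof only uses that $P$ is a normal nilpotent subgroup of $S$, which is exactly what Schmidt's theorem gives, so the claim $P=\fit(S)$ should simply be dropped.
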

\begin{proof}
	We prove by induction on the order of $G$ that if $\Gamma_{\eng}(G)=\Gamma_{\nil}(G)$, then $G$ is nilpotent. The property that $\Gamma_{\eng}(G)=\Gamma_{\nil}(G)$ is inherited by all the subgroups of $G.$ So by induction all the proper subgroups of $G$ are nilpotent, and this implies that $G$ is soluble. The set of universal vertices of the graph  $\Gamma_{\nil}(G)$ coincides with the hypercenter of $G$ (see \cite[Proposition 2.1]{az}), while the set of universal vertices of  $\Gamma_{\eng}(G)$ is the set of elements that are either left or right Engel and coincides with the Fitting subgroup of $G$ (see e.g. \cite[12.3.7]{rob}). Since $\Gamma_{\eng}(G)=\Gamma_{\nil}(G)$, if follows that  $G$ is a finite soluble group, whose hypercenter and Fitting subgroup $\fit(G)$ coincide, and this is possible only if $G$ is nilpotent.
Indeed, arguing by contradiction, assume that $G$ is not nilpotent. Since $G$ is soluble and $\fit(G) <  G$ by assumption, there exists a normal subgroup $N$ of $G$ such that $1 \neq N/\fit(G) = \fit(G/\fit(G))$. In particular, there is an integer $h\geq 2$ such that the $h$-term of the lower central series of $N$, denoted $\gamma_h(N)$, is contained in $\fit(G)$. Since $\fit(G)$ coincides with the hypercenter of $G$, we can find an integer $k\geq 2$ such that $\gamma_{h+k}(N) =1$, proving that $N$ is nilpotent and contradicting the fact that $N \nleq \fit(G)$. This proves that $G$ must be nilpotent.

	Conversely, if $G$ is nilpotent, then every element of $G$ is a right Engel element, so $\Gamma_{\eng}(G)=\Gamma_{\nil}(G)$ is the complete graph on $|G|$ vertices.
\end{proof}

\section{The non-commuting graph}
The non-commuting graph of a group was first considered by Paul Erd\H{o}s in 1975, while stating a problem solved by Neumann in \cite{Neumann}. If $G$ is a finite group, the non-commuting graph of $G$ is the graph whose vertices are the non-central elements of $G$ (i.e. $G \backslash Z(G)$) and in which two vertices $x$ and $y$ are adjacent if and only if they do not commute, or equivalently, if the group $\langle x,y\rangle$ is non-abelian. Note that the non-commuting graph of $G$ is the complement of the commuting graph of $G$ (where two elements are joined if they commute). 

 Question \ref{q1} in terms of the non-commuting graph was first posed by Abdollahi, Akbari and Maimani in \cite{AAM}, and it is in fact still open. 
Nevertheless, Question \ref{q1} is known to be true under certain extra conditions.

\begin{thm}\cite[Theorem 3.24]{AAM}\label{nc.same.order}
Let $G$ and $H$ be finite non-abelian groups with isomorphic non-commuting graphs. If $G$ is nilpotent and $|G| = |H|$, then H is nilpotent.
\end{thm}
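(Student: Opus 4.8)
The plan is to translate the isomorphism into numerical and structural group invariants and then exploit the rigidity of a nilpotent group as the direct product of its Sylow subgroups. First I set up the dictionary between $\Gamma_G$ and $G$. For a non-central element $x$, the vertices non-adjacent to $x$ (together with $x$ itself) are exactly $C_G(x)\setminus Z(G)$, so the closed non-neighbourhood of $x$ has size $|C_G(x)|-|Z(G)|$ and the degree of $x$ equals $|G|-|C_G(x)|$. Fix an isomorphism $\phi\colon\Gamma_G\to\Gamma_H$. Comparing the numbers of vertices gives $|G|-|Z(G)|=|H|-|Z(H)|$, and since $|G|=|H|$ this yields $|Z(G)|=|Z(H)|=:z$. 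Because $\phi$ preserves degrees, $|C_G(x)|=|C_H(\phi(x))|$ for every non-central $x$, and as $|G|=|H|$ the non-central class sizes $|G:C_G(x)|$ are matched elementwise by $\phi$. Counting the elements with a prescribed class size then shows that $G$ and $H$ have the same number of conjugacy classes of each size (the $z$ singleton classes on both sides corresponding to the centres), so $G$ and $H$ share the same class-size multiset.

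Next I would bring in the nilpotency of $G$. Writing $G=P_1\times\cdots\times P_r$ as the direct product of its Sylow subgroups, the fact that elements of coprime order commute gives $C_G(x)=\prod_i C_{P_i}(x_i)$ for $x=(x_1,\dots,x_r)$, with $x$ non-central precisely when $x_i\notin Z(P_i)$ for some $i$. Thus the commuting relation on $G\setminus Z(G)$, and hence $\Gamma_G$, is governed coordinatewise by the non-abelian factors: if only $P_1$ is non-abelian then $\Gamma_G$ is the blow-up of $\Gamma_{P_1}$ in which every vertex is replaced by an independent set of size $|G|/|P_1|$, and in general $\Gamma_G$ is an analogous coprime ``product'' over the non-abelian Sylow subgroups. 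In particular every non-central class size of $G$ factors as a product of prime-power class sizes $|P_i:C_{P_i}(x_i)|$, one per prime, and the class-size multiset is the product of the per-prime multisets.

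The goal is then to show that this coprime product structure is forced upon $H$. Using the criterion that $H$ is nilpotent if and only if each Sylow subgroup is normal, equivalently if and only if elements of coprime order in $H$ commute, I would argue prime by prime: the preserved class-size data pins down, for each prime $p\mid|H|$, the contribution of the $p$-part, while the preserved adjacency (not merely the centralizer sizes) should force the Sylow $p$-subgroup of $H$ to centralize a Hall $p'$-subgroup. Identifying inside $\Gamma_H\cong\Gamma_G$ the blow-up factors coming from the individual primes would exhibit $H$ as a direct product of groups of coprime prime-power order, that is, as a nilpotent group.

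The hard part will be exactly this transfer of structure. Unlike the power graph used in Theorem \ref{power.YES}, the non-commuting graph does not record element orders, so one cannot simply count $p$-elements and invoke a Sylow-normality criterion; this is precisely why Question \ref{q1} is open for the non-commuting graph in general. Matching conjugacy class sizes is, moreover, not by itself enough to guarantee nilpotency, so the argument must genuinely use the full non-commuting relation together with the strong constraint $|G|=|H|$. Making the rigidity of the coprime blow-up decomposition precise --- showing that it, and not merely its numerical shadow, must reappear in $\Gamma_H$ --- is where the real difficulty lies.
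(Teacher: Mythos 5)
Your first paragraph is correct and coincides exactly with the paper's opening step: counting vertices gives $|G|-|Z(G)|=|H|-|Z(H)|$, so $|Z(G)|=|Z(H)|$ once $|G|=|H|$ is used; preservation of degrees gives $|G|-|C_G(g)|=|H|-|C_H(h)|$ for $h$ the image of $g$ under the graph isomorphism, hence $|g^G|=[G:C_G(g)]=[H:C_H(h)]=|h^H|$; and therefore $G$ and $H$ have the same number of conjugacy classes of each size.

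Everything after that contains a genuine gap, and it stems from an incorrect claim. You write that ``matching conjugacy class sizes is, moreover, not by itself enough to guarantee nilpotency,'' but in this setting it is: by the criterion of Cossey, Hawkes and Mann (the main result of \cite{CHM}, which is precisely what the paper invokes), a finite group whose multiset of conjugacy class sizes agrees with that of a nilpotent group is itself nilpotent. Hence your first paragraph already finishes the proof. Because you discard this route, you are forced into the structural program of your last two paragraphs --- transferring the coprime ``blow-up'' decomposition of $\Gamma_G$ to $\Gamma_H$ so as to exhibit $H$ as a direct product of groups of coprime prime-power order --- and you concede yourself that making this rigidity precise ``is where the real difficulty lies.'' That step is never carried out, so as it stands the proposal establishes only the class-size equality and leaves the nilpotency of $H$ unproved. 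To complete the argument you must either quote the Cossey--Hawkes--Mann theorem or reprove it; note that it is a genuine group-theoretic result about class sizes (its proof has nothing to do with the non-commuting graph), so it cannot be absorbed into the graph bookkeeping you propose.
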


\begin{proof}
Note that by the main result in \cite{CHM}, it is enough to prove that $G$ and $H$ have the same number of conjugacy classes of the same size. For every integer $i\geq 1$, let $m_i(G)$ and $m_i(H)$ denote the number of conjugacy classes of size $i$ of $G$
 and $H$, respectively. We will show that $m_i(G) = m_i(H)$ for every $i\geq 1$.
 First note that since the non-commuting graphs of $G$ and $H$ are isomorphic, we get $|G| - |Z(G)| = |H| - |Z(H)|$ (looking at the number of vertices). By assumption, $|G| = |H|$, so we deduce that $m_1(G) = |Z(G)| = |Z(H)| = m_1(H)$. Now, for every $g \in G \backslash Z(G)$, if  $h$ is the image of $g$ under the graph isomorphism, then looking at the number of vertices joined to $g$ and $h$, respectively, we deduce that $|G| - |C_G(g)| = |H| - |C_H(h)|$. Using again the hypothesis $|G|=|H|$, we obtain $|g^G| = [G \colon C_G(g)] = [H \colon C_H(h)] = |h^H|$. This implies that for every $i > 1$ we have $m_i(G) = m_i(H)$, concluding the proof. 
 \end{proof}

 In the same work, Abdollahi, Akbari and Maimani conjectured that any two finite groups having isomorphic non-commuting graph should have the same order, but this was proven to be false in \cite{counter.non.comm}. The counterexample built in \cite{counter.non.comm} involves two finite groups that are nilpotent, and so it does not affect Question \ref{q1}. Also, such groups are finite AC-groups, that is, finite groups whose centralizers of non-central elements are all abelian. Taking inspiration from such result, in a recent paper Grazian and Monetta proved the following:

 \begin{thm}\label{nc.AC.groups}\cite[Corollary 1.4]{GM}
Let $G$ and $H$ be finite non-abelian groups with isomorphic non-commuting graphs. If $G$ is a nilpotent AC-group and $|Z(G)| \geq |Z(H)|$, then $H$ is nilpotent.
 \end{thm}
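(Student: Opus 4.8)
The plan is to pass to the class of AC-groups, where the non-commuting graph is forced to be complete multipartite, and then to read off enough numerical data from the graph isomorphism to transport the nilpotency of $G$ to $H$. The first step is the bookkeeping lemma that a finite non-abelian group $K$ is an AC-group if and only if its non-commuting graph is complete multipartite: if $K$ is an AC-group and $x,y,z\in K\setminus Z(K)$ satisfy $[x,y]=[y,z]=1$, then $x,z\in C_K(y)$ and $C_K(y)$ is abelian, so $[x,z]=1$; thus commuting is an equivalence relation on $K\setminus Z(K)$, its classes being the sets $C_K(x)\setminus Z(K)=M\setminus Z(K)$ for $M$ a maximal abelian subgroup of $K$, and $\Gamma_K$ is complete multipartite on these parts. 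Conversely, complete multipartiteness makes commuting transitive on non-central elements, which forces $C_K(x)$ abelian for every non-central $x$. Applying this to $G$ makes $\Gamma_G$ complete multipartite, and since $\Gamma_H\cong\Gamma_G$ the graph $\Gamma_H$ is complete multipartite as well, so $H$ is an AC-group.

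Next I would extract the invariants. For an AC-group $K$ with maximal abelian subgroups $M_1,\dots,M_k$ (each containing $Z(K)$, pairwise meeting in $Z(K)$), the part-size multiset of $\Gamma_K$ is $\{|M_i|-|Z(K)|\}$, and this multiset is a complete isomorphism invariant of a complete multipartite graph. Hence $\Gamma_G\cong\Gamma_H$ yields the same number $k$ of maximal abelian subgroups and, after relabelling those of $H$ as $N_1,\dots,N_k$, the equalities $|M_i|-z_G=|N_i|-z_H$, where $z_G=|Z(G)|$ and $z_H=|Z(H)|$. Writing $m_i=[M_i:Z(G)]$ and $n_i=[N_i:Z(H)]$ this reads $z_G(m_i-1)=z_H(n_i-1)$, so every ratio $(m_i-1)/(n_i-1)$ equals the constant $z_H/z_G\le1$. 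Since $G$ is a nilpotent non-abelian AC-group, its Sylow decomposition forces $G=P\times A$ with $P$ a non-abelian AC $p$-group and $A$ abelian of order prime to $p$; the maximal abelian subgroups of $G$ are then the $M'\times A$ with $M'$ maximal abelian in $P$, so each $m_i=[M':Z(P)]$ and $[G:Z(G)]=[P:Z(P)]$ are powers of $p$.

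I would then split on the center. Counting vertices gives $|H|-z_H=|G|-z_G$; hence if $z_H=z_G$ then $|H|=|G|$ and $[H:Z(H)]=[G:Z(G)]=[P:Z(P)]$ is a power of $p$. In that case $H/Z(H)$ is a $p$-group, in particular nilpotent, and a finite group whose central quotient is nilpotent is itself nilpotent, so $H$ is nilpotent. This disposes of the equal-center case cleanly and without appeal to any classification.

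The substantial case is the strict one $z_H<z_G$, and this is where I expect the main obstacle to lie. Here I would invoke the classification of finite non-abelian AC-groups: the nilpotent ones are precisely the $Q\times B$ with $Q$ a non-abelian AC $q$-group, while every non-nilpotent AC-group has $H/Z(H)$ either a Frobenius group with abelian kernel and complement or one of the (almost) simple types appearing in that classification. The goal is to show that none of these non-nilpotent possibilities can reproduce the rigid part-size multiset $\{z_G(p^{c_i}-1)\}$ coming from $G$. In the Frobenius case the part sizes take only the two values $z_H([K:Z(H)]-1)$ and $z_H([L:Z(H)]-1)$, with kernel and complement indices coprime and both exceeding $1$; matching this against a multiset each of whose members is $z_G$ times a number $p^{c}-1$, under $z_G>z_H$ together with the mod-$p$ congruences on the number of maximal abelian subgroups of a fixed index in the $p$-group $P$, should force a contradiction, while the (almost) simple quotients are excluded because their part sizes cannot all be of the single form $z_G(p^{c}-1)$ with $z_G\ge p$ fixed. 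The hypothesis $z_G\ge z_H$ is exactly what makes this tractable: it settles the equal-center case at once and, in the strict case, forbids $H$ from acquiring the larger center that a non-nilpotent central-type AC-group would require; dropping it reopens the door to non-nilpotent $H$, consistent with Question~\ref{q1} remaining open for the non-commuting graph in general.
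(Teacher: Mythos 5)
Your setup is sound and in fact coincides with the paper's first step: the complete-multipartite characterization of AC-groups correctly transfers the AC property from $G$ to $H$, the multiset of part sizes is indeed a complete invariant of a complete multipartite graph, and your equal-center case is correct and pleasantly self-contained --- from $|Z(G)|=|Z(H)|$ and equal vertex counts you get $|G|=|H|$, hence $[H:Z(H)]=[G:Z(G)]$ is a power of $p$, so $H/Z(H)$ and therefore $H$ is nilpotent (this case also follows at once from Theorem \ref{nc.same.order}).

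The genuine gap is the case $|Z(H)|<|Z(G)|$, which is where the entire content of the theorem lies, and your treatment of it fails on two counts. First, the classification you invoke is misstated: by Schmidt's theorem (Satz 5.12 of Schmidt's 1970 paper, as cited here), a non-abelian solvable AC-group $H$ need not have $H/Z(H)$ Frobenius with abelian kernel and complement preimages. There is also the type in which the preimage $F$ of the Frobenius kernel is itself a non-abelian AC-group with $Z(F)=Z(H)$ --- so the maximal abelian subgroups contained in $F$ contribute many distinct part sizes, not just two --- and the type $H/Z(H)\cong S_4$, which is neither Frobenius nor almost simple. Your ``two part sizes'' analysis therefore does not even enumerate the solvable possibilities correctly. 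Second, in the cases you do consider, no contradiction is actually derived: ``should force a contradiction'' and ``are excluded because'' are precisely the statements that need proof, and they are the real work of the cited result of Grazian and Monetta. Note also that the paper's own argument disposes of non-solvable $H$ not by numerology on part sizes but by a known result (Proposition 3.14 of Abdollahi--Akbari--Maimani), which yields $|G|=|H|$ outright and reduces that case to Theorem \ref{nc.same.order}; only the solvable case is then settled by a case-by-case analysis over all four of Schmidt's types. Without that tool, or without actually carrying out the arithmetic you gesture at for every type (not just the abelian-kernel Frobenius one), your proof of the strict-inequality case is incomplete.
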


 \begin{proof}[Idea of the proof]
 First notice that the isomorphism between the non-commuting graphs of the finite groups $G$ and $H$ implies that $H$ is an AC-group. By \cite[Proposition 3.14]{AAM}, if $H$ is non-solvable, then $|G|=|H|$ and we conclude by Theorem \ref{nc.same.order}. Therefore we can assume that $H$ is a solvable AC-group. Such groups have been classified in \cite[Satz 5.12]{Schmidt1970}, showing that if $H$ is non-abelian then $H$ must follow one of $4$ possible characterizations. The proof is completed analyzing the different possibilities.     
 \end{proof}

 Observe that a nilpotent AC-group has a unique Sylow subgroup that is non-abelian. Question \ref{q1} in the case of a finite nilpotent group $G$ with at least two distinct non-abelian Sylow subgroups has also been covered:

 \begin{thm}\cite[Theorem 2.4]{non-comm.nilp}\label{nc.two.sylow}
Let $G$ and $H$ be finite non-abelian groups with isomorphic non-commuting graphs. If $G$ is nilpotent, $G$ has at least two non-abelian Sylow subgroups and $|Z(G)| \geq |Z(H)|$, then $|G|=|H|$.    
 \end{thm}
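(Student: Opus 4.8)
The plan is to read off two numerical invariants from the graph isomorphism and then force $|G|=|H|$ by a divisibility argument that exploits the product structure of a nilpotent group. Throughout, recall that in the non-commuting graph the vertex set is $G\setminus Z(G)$ and the degree of a vertex $x$ equals $|G|-|C_G(x)|$.

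First I would record the elementary invariants. Writing $\sigma\colon \Gamma_G\to\Gamma_H$ for the isomorphism, counting vertices gives $|G|-|Z(G)|=|H|-|Z(H)|$, so together with the hypothesis $|Z(G)|\ge|Z(H)|$ we obtain $|G|\ge|H|$; set $\delta:=|G|-|H|=|Z(G)|-|Z(H)|\ge 0$. The goal becomes $\delta=0$, and I would argue by contradiction, assuming $\delta>0$. Since degrees are preserved, for every non-central $x$ we get $|C_H(\sigma(x))|=|C_G(x)|-\delta$.

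Next comes the arithmetic core. For a non-central $x$ put $c=|C_G(x)|$ and let $s=|G|/c$ be its conjugacy-class size. Then $c\mid|G|$, while $c-\delta=|C_H(\sigma(x))|$ divides $|H|=|G|-\delta$; using $|G|=cs$ and subtracting multiples one finds $(c-\delta)\mid \delta(s-1)$. I would apply this to an element of largest centralizer, i.e.\ of smallest non-trivial class size $s_{\min}$, whose centralizer has order $M_G=|G|/s_{\min}$. Because $G$ has two non-abelian Sylow subgroups, choosing $x$ non-central in one of them and central elsewhere shows $M_G>|Z(G)|\ge\delta$, so $M_G-\delta>0$ and the divisibility forces $M_G-\delta\le\delta(s_{\min}-1)$, whence $M_G\le \delta s_{\min}$; together with $|G|=M_G s_{\min}$ and $\delta\le|Z(G)|$ this yields the key inequality $[G:Z(G)]\le s_{\min}^2$.

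Finally I would contradict this using nilpotency. Writing $G=\prod_i P_i$, centralizers and class sizes multiply over the factors, so $[G:Z(G)]=\prod_i[P_i:Z(P_i)]$ and $s_{\min}=\min_i s_i$, the minimum taken over the non-abelian Sylow subgroups, where $s_i$ is the smallest non-trivial class size of $P_i$. Let $P,Q$ be two non-abelian Sylow subgroups at primes $p\ne q$, with $s_P=p^{\alpha}$ and $s_Q=q^{\beta}$ ($\alpha,\beta\ge1$). Since every class size of a $p$-group divides the index of its center, $s_P\le[P:Z(P)]$ and $s_Q\le[Q:Z(Q)]$, giving $[G:Z(G)]\ge s_Ps_Q$; on the other hand $s_{\min}\le\min(s_P,s_Q)$, so the key inequality forces $s_Ps_Q\le\min(s_P,s_Q)^2$, i.e.\ $s_P=s_Q$ — impossible for powers of distinct primes. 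This contradiction shows $\delta=0$, hence $|G|=|H|$. I expect the main obstacle to be identifying the right extremal element (smallest non-trivial class size) and verifying $M_G>\delta$ so that the divisibility constraint is both valid and strong enough; this is precisely where the hypothesis of two non-abelian Sylow subgroups is indispensable, since with a single non-abelian Sylow subgroup the concluding contradiction collapses.
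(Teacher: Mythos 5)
Your proof is correct, but there is no in-paper argument to compare it with: the paper states Theorem~\ref{nc.two.sylow} without proof, quoting it verbatim from \cite[Theorem 2.4]{non-comm.nilp}, so you have supplied a proof where the survey gives none. Checking your steps: the vertex count gives $\delta:=|G|-|H|=|Z(G)|-|Z(H)|\geq 0$; degree preservation gives $|C_H(\sigma(x))|=|C_G(x)|-\delta$ for every non-central $x$; and since $|C_H(\sigma(x))|$ divides $|H|=|G|-\delta$ while $|G|=cs$ with $c=|C_G(x)|$ and $s=|x^G|$, the identity $cs-\delta=s(c-\delta)+\delta(s-1)$ indeed yields $(c-\delta)\mid\delta(s-1)$. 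Applying this to an element of maximal centralizer $M_G=|G|/s_{\min}$, with $M_G>\delta$ and $s_{\min}\geq 2$, correctly produces $M_G\leq\delta s_{\min}$ when $\delta>0$, hence $[G:Z(G)]\leq s_{\min}^2$ using $\delta\leq |Z(G)|$. Finally, the nilpotent factorization gives $[G:Z(G)]\geq[P:Z(P)][Q:Z(Q)]\geq s_Ps_Q$ for two non-abelian Sylow subgroups $P,Q$, whereas $s_{\min}^2\leq\min(s_P,s_Q)^2<s_Ps_Q$ because $s_P$ and $s_Q$ are powers of distinct primes and therefore unequal; this contradicts the key inequality and forces $\delta=0$. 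One small correction of emphasis: the two-Sylow hypothesis is not what guarantees $M_G>|Z(G)|\geq\delta$ --- that holds in every non-abelian group, since $C_G(x)\supseteq\langle x\rangle Z(G)\supsetneq Z(G)$ for any non-central $x$ --- it is needed only for the final contradiction, exactly as you note at the end (a single non-abelian Sylow subgroup, e.g.\ an extraspecial $p$-group, satisfies $[G:Z(G)]=s_{\min}^2$, so the argument would collapse there).
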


 Note that if the assumptions of Theorem \ref{nc.two.sylow} are satisfied, then once again the conclusion that $H$ is nilpotent is reached by Theorem \ref{nc.same.order}.

 Theorems \ref{nc.AC.groups} and \ref{nc.two.sylow} imply that in order to give a positive answer to Question \ref{q1} for non-commuting graphs with the additional assumption $|Z(G)| \geq |Z(H)|$, it remains to consider the case in which $G$ is a finite nilpotent group of the form  $G = P \times A$, for a non-abelian $p$-group $P$ and an abelian group $A$ with $(|P|,|A|)=1$, containing at least one element $x \in G \backslash Z(G)$ such that $C_G(x)$ is not abelian. In this view, Grazian and Monetta posed the following conjecture:

 \begin{conjecture}\cite[Conjecture 3]{GM}\label{conj.nostra}
 Let $p$ be a prime and suppose $G=P \times A$ is a finite group where $P \in \rm{Syl}_p(G)$ is non-abelian and $A$ is an abelian $p'$-group. If $H$ is a finite group whose non-commuting graph is isomorphic to the one of $G$ and $|Z(G)| \geq |Z(H)|$ then $H=Q \times B$, where $q$ is a prime, $Q \in \rm{Syl}_q(H)$ is non-abelian and $B$ is an abelian $q'$-group. In particular, $H$ is nilpotent.
 \end{conjecture}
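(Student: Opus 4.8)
The plan is to reformulate the desired conclusion in terms of conjugacy class sizes and then to recover the relevant arithmetic from the graph. The starting point is that, since $A$ is central in $G=P\times A$, every non-central element $(x,\alpha)$ satisfies $C_G((x,\alpha))=C_P(x)\times A$, so its conjugacy class has size $[P:C_P(x)]$, a power of $p$. Thus all conjugacy class sizes of $G$ are powers of the single prime $p$. Conversely, by a classical theorem of It\^o, a finite group all of whose class sizes are powers of one prime $q$ splits as $Q\times B$ with $Q$ its Sylow $q$-subgroup and $B$ a central $q'$-subgroup; and since $\Gamma_H\cong\Gamma_G$ is non-empty, $Q$ is non-abelian. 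Hence the conjecture is \emph{equivalent} to the assertion that all conjugacy class sizes of $H$ are powers of a single prime, and this is the property I would aim to establish.

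First I would reduce to the case that $H$ is solvable. Indeed, if $H$ were non-solvable then \cite[Proposition 3.14]{AAM} would give $|G|=|H|$, whence Theorem \ref{nc.same.order} would force $H$ to be nilpotent, a contradiction. So from now on $H$ may be assumed solvable.

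Next I would read the centralizer data off the graph. Writing $n$ for the number of vertices, the isomorphism gives $n=|G|-|Z(G)|=|H|-|Z(H)|$, together with $\deg(g)=|G|-|C_G(g)|$ on the $G$ side and $\deg(h)=|H|-|C_H(h)|$ on the $H$ side. For $G$ the relevant numbers $|C_G(g)|$ and $|Z(G)|$ are exactly those of the form $|A|\,p^{c}$ with $p^{c}=|C_P(x)|$, so any two of them have quotient a power of $p$. The content of the conjecture is that the corresponding numbers on the $H$ side, namely $|Z(H)|=|H|-n$ and $|C_H(h)|=|H|-\deg(h)$, again have all pairwise quotients equal to powers of a single prime $q$; equivalently, each divides $|H|$ with a $q$-power quotient. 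The hypothesis $|Z(G)|\ge|Z(H)|$ translates into $|H|\le|G|$, so $|H|$ ranges over a bounded set of candidate scales, and in addition $|H|$ is the order of an actual solvable group realizing the given graph. The goal is to show that, among these candidate scales, the genuine value $|H|$ forces the single-prime pattern; It\^o's theorem then delivers $H=Q\times B$, and in particular $H$ is nilpotent.

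The hard part will be precisely this recovery step. Unlike the situation of Theorem \ref{nc.AC.groups}, here $P$ is not an AC-group, so the commuting relation on non-central elements is not transitive and $\Gamma_G$ is \emph{not} complete multipartite; consequently the classification of groups with complete multipartite non-commuting graph is unavailable, and with it the explicit description of $H$ exploited in \cite{GM}. One therefore needs either a graph invariant of $\Gamma_G$ that directly certifies the single-prime signature of the centralizer orders, independently of the unknown scale $|H|$, or a sharp enough lower bound on $|Z(H)|$ to force $|H|=|G|$, after which Theorem \ref{nc.same.order} yields nilpotency and a comparison of the Sylow structures of the two nilpotent groups completes the argument. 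Producing such an invariant, or ruling out a strictly smaller solvable $H$ whose class sizes mix two distinct primes, is the crux, and is exactly what keeps the statement at the level of a conjecture rather than a theorem.
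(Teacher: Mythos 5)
This statement is an open conjecture: the paper offers no proof of it, only a remark noting the two special cases where it is known ($G$ an AC-group, via Theorem \ref{nc.AC.groups}, and $G$ a $p$-group, via \cite{non.comm.p.groups}). Your proposal, appropriately, does not claim to prove it either, and the partial reductions you do make are sound. The reformulation is correct: in $G=P\times A$ every centralizer is $C_P(x)\times A$, so all class sizes are $p$-powers, and conversely the classical fact that a prime $r$ divides no class size exactly when the Sylow $r$-subgroup is central shows that a group whose class sizes are powers of a single prime $q$ is $Q\times B$ with $B$ an abelian (central) $q'$-group; non-emptiness of the graph then makes $Q$ non-abelian. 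Your reduction to solvable $H$ via \cite[Proposition 3.14]{AAM} and Theorem \ref{nc.same.order} is exactly the reduction used in the paper's sketch of Theorem \ref{nc.AC.groups}, and the translation of $|Z(G)|\geq |Z(H)|$ into $|H|\leq |G|$ via the vertex count is right. You also correctly diagnose why the method of \cite{GM} stops working here: once $P$ is not an AC-group, the non-commuting graph is no longer complete multipartite and the classification of solvable AC-groups is unavailable.

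The genuine gap is the one you name yourself: no argument is given (and none is known) that recovers the single-prime signature of the centralizer orders of $H$ from the graph isomorphism. One concrete caution about your two proposed ways to close this gap: the second route, namely a bound forcing $|H|=|G|$, is provably a dead end. The counterexample of \cite{counter.non.comm} exhibits nilpotent AC-groups with isomorphic non-commuting graphs and distinct orders; since the vertex count gives $|G|-|Z(G)|=|H|-|Z(H)|$, the group of larger order automatically has the larger center, so such a pair satisfies every hypothesis of Conjecture \ref{conj.nostra} while $|G|\neq|H|$. Hence equality of orders simply cannot be forced, and only your first route --- a scale-independent graph invariant certifying that all quotients $|C_H(h_1)|/|C_H(h_2)|$ are powers of one prime --- could possibly succeed. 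Producing such an invariant is precisely the content of the conjecture, so what you have written is a correct framing of the problem, not a proof of it.
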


 \begin{rem}
 Theorem \ref{nc.AC.groups} implies that Conjecture \ref{conj.nostra} is true when $G$ is an AC-group. Also, if $G$ is a finite $p$-group (so $A=1$), then $|G|=|H|$ by \cite[Theorem 1.2]{non.comm.p.groups}, and so Conjecture \ref{conj.nostra} is true in this instance too.  
 \end{rem}

\section{The generating graph}

The generating  graph of a finite $2$-generated group $G$ is the graph defined on the elements of $G$ in such a way that two distinct vertices are connected by an edge if and only if they generate $G$.  It was defined by Liebeck and
Shalev in \cite{LS}, and has been further investigated by
many authors: see for example \cite{ bglmn,  bucr, tra, CL3,  fu, diam, LM2, lm3, LMRD} for some
of the range of questions that have been considered.

Question \ref{q1} is still open for generating graphs (even with the extra assumption that $H$ is solvable) and appears to be a difficult problem.

In this section, we prove that the answer to Question \ref{q1} is affirmative at least in the particular case when $H$ is a finite supersoluble group: 

\begin{thm}\label{super}
Let $G$ and $H$ be finite $2$-generated groups with isomorphic generating graphs. If $G$ is nilpotent and $H$ is supersoluble, then $H$ is nilpotent.
\end{thm}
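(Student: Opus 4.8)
The plan is to extract from the graph isomorphism enough numerical data to force every chief factor of $H$ to be central, which (for a supersoluble, indeed for any, group) is equivalent to $H$ being nilpotent. First I would record the cheap invariants. An isomorphism $\Gamma_G\cong\Gamma_H$ preserves the number of vertices, so $|G|=|H|$. A vertex $v$ is \emph{universal} in $\Gamma_G$ exactly when $\langle v,w\rangle=G$ for all $w\neq v$; taking $w=1\in\langle v\rangle$ (for $v\neq 1$) forces $\langle v\rangle=G$, so $\Gamma_G$ has a universal vertex if and only if $G$ is cyclic. Hence if $G$ is cyclic the same holds for $H$ and we are done, and I may assume from now on that $G$ and $H$ are noncyclic. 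I would also invoke the standard reduction that $H$ is nilpotent if and only if $H/\Phi(H)$ is nilpotent, together with the fact that for a $2$-generated group $X$ one has $\langle x,y\rangle=X$ iff $\langle x\Phi(X),y\Phi(X)\rangle=X/\Phi(X)$; consequently adjacency in $\Gamma_X$ depends only on the cosets of $\Phi(X)$, and $\Gamma_X$ is the blow-up of $\Gamma_{X/\Phi(X)}$ obtained by replacing each vertex with a set of $|\Phi(X)|$ pairwise non-adjacent false twins.

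Next I would exploit the product structure on the nilpotent side. Writing $G=\prod_p P_p$ with each $P_p$ a $2$-generated $p$-group, coprimality gives $\langle x,y\rangle=G$ iff $\langle x_p,y_p\rangle=P_p$ for every $p$, so the degree of $v=(v_p)$ factorises as $\deg_{\Gamma_G}(v)=\prod_p N_p(v_p)$, where $N_p(v_p)$ counts the completions of $v_p$ inside $P_p$ and depends only on whether $v_p\in\Phi(P_p)$ (for $P_p$ of rank $2$). This yields closed forms for the invariants I intend to compare: the number of edges equals $\tfrac12|G|^2\prod_p(1-p^{-2})\prod_{d(P_p)=2}(1-p^{-1})$, and one gets equally explicit expressions for the maximum degree and the number of isolated vertices. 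The essential point is that each factor here reflects a \emph{central} chief factor of $G$.

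Finally I would run the comparison on $H$. Assume for contradiction that $H$ is not nilpotent. Being supersoluble, $H$ has a Sylow tower and a chief series with cyclic factors, and non-nilpotency forces at least one noncentral chief factor $C_p$, on which $H$ acts through a nontrivial cyclic group. A direct M\"obius computation (or the crown decomposition of the probabilistic zeta function $P_X(s)$, where the number of ordered generating pairs of $X$ is $|X|^2P_X(2)$) shows that such a noncentral factor contributes $(1-p^{1-s})$ to $P_H(s)$, hence the value $(1-p^{-1})$ at $s=2$, in place of the central contribution $(1-p^{-2})$. Since $\Gamma_H$ has exactly $\tfrac12|H|^2P_H(2)$ edges and this number is preserved by the isomorphism, matching it against the nilpotent formula for $G$ (recalling $|G|=|H|$) is designed to produce a contradiction, forcing all chief factors of $H$ to be central and hence $H$ nilpotent.

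I expect the main obstacle to be this last step in full generality: I must show that a noncentral cyclic chief factor always perturbs the chosen invariant away from its all-central (nilpotent) value and cannot be compensated by the remaining crowns, which requires controlling the entire crown decomposition of $P_H(s)$ rather than a single factor, together with a monotonicity estimate comparing $(1-p^{-1})$ against the available central contributions. A secondary technical point is the Frattini reduction: since $\Gamma_{X/\Phi(X)}$ may itself carry nontrivial false twins (already $\Gamma_{C_p\times C_p}$ does), recovering $|\Phi(G)|=|\Phi(H)|$ and the reduced graphs from the twin classes of $\Gamma_G\cong\Gamma_H$ demands careful bookkeeping of twin-class sizes, and one may prefer to bypass it by comparing the full degree sequences directly rather than passing to the Frattini quotient.
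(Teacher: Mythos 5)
Your preliminary reductions are fine (recovering $|G|=|H|$ from the vertex count, disposing of the cyclic case via universal vertices, the Frattini blow-up, and the factorisation of $P_G(2)$ for nilpotent $G$), but the step you defer as ``the main obstacle'' is not a technical gap to be filled: it is false, and the whole strategy collapses there. The invariant you propose to compare --- the number of edges, equivalently $P_H(2)$ once $|G|=|H|$ is known --- does not detect non-nilpotency, because the compensation you hope to exclude genuinely occurs. A noncentral chief factor of odd prime order $q$ with multiplicity one contributes $(1-1/q)$ to $P_H(2)$, and if $H$ also has a central complemented $q$-factor contributing $(1-1/q^2)$, the product $(1-1/q)(1-1/q^2)$ is exactly the contribution of a rank-two Sylow $q$-subgroup of a nilpotent group. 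Concretely, take an odd prime $q$, $H=D_{2q}\times C_q$ (supersoluble, not nilpotent) and $G=C_q\times C_{2q}$ (nilpotent): both have order $2q^2$ and
\begin{equation*}
P_G(2)=P_H(2)=\left(1-\frac{1}{q}\right)\left(1-\frac{1}{q^2}\right)\left(1-\frac{1}{4}\right),
\end{equation*}
so the two generating graphs have the same number of vertices and the same number of edges (for $q=3$: both graphs have $18$ vertices and $72$ edges). Hence no argument using only the order and the edge count can prove the theorem; the ``monotonicity estimate'' you would need is contradicted by this family of examples.

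What actually saves the statement --- and this is how the paper proceeds --- is the full degree sequence, used \emph{before} any edge count. By Lemma \ref{gradinilpo}, every degree in the generating graph of a nilpotent group has the form $|G|\prod_{p\in S}\left(1-\frac{1}{p}\right)$ with $S$ a set of \emph{pairwise distinct} primes, and the key numerical Lemma \ref{unfat} says such a product determines the multiset of primes. Comparing degrees of $\Gamma(H)$ (computed via Lemmas \ref{supuno} and \ref{supdue} for $H/\Phi(H)=(V_1\times\dots\times V_t)\rtimes Y$) with these forces the module primes $r_i$ to be pairwise distinct and coprime to $|Y|$ (Claims 1--3 of the paper), and then pins down exactly how the primes of $H$ are distributed against the partition $\pi_1(G),\pi_2(G)$. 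Only after this identification does the edge-count comparison via Gasch\"utz's formula become decisive, yielding $\prod_{i}\left(1+\frac{1}{r_i}\right)=1$ and hence $t=0$. You can see the necessity of this in the example above: $\Gamma(S_3\times C_3)$ has vertices of degree $8=18\cdot\left(1-\frac{1}{3}\right)^2$, a value whose only factorisation of this shape uses a \emph{repeated} prime, which Lemma \ref{unfat} shows is impossible for the generating graph of any nilpotent group; it is the degree sequence, not the edge count, that rules out the isomorphism. So your closing aside about ``comparing the full degree sequences directly'' is not an optional bookkeeping refinement --- it is the indispensable core of any correct proof, and the proposal as structured does not contain it.
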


First we need an easy numerical lemma.

\begin{lem}\label{unfat}Let $\alpha=(a_1,\dots,a_r)$ and  $\beta=(b_1,\dots,b_s)$ be two sequences of prime numbers, with $a_1\leq \dots \leq a_r$ and
	$b_1\leq \dots \leq b_s.$
	If
	$$\prod_{1\leq i\leq r}\left(1-\frac{1}{a_i}\right)=\prod_{1\leq j\leq s}\left(1-\frac{1}{b_j}\right),$$ then $\alpha=\beta.$
\end{lem}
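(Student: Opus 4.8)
The plan is to show that the value of the product $\prod_i\bigl(1-\tfrac{1}{a_i}\bigr)=\prod_i\frac{a_i-1}{a_i}$ determines the multiset of primes in the sequence, and then to peel off one prime at a time by induction. The driving observation is that the \emph{largest} prime occurring can be read off from a single $p$-adic valuation of the product.

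First I would set $p=\max\{a_1,\dots,a_r,b_1,\dots,b_s\}$, the largest prime appearing in either sequence (the base case, in which both sequences are empty and both products equal the empty product $1$, is handled separately and is forced, since each factor $1-\tfrac1{a_i}$ is strictly less than $1$, so any nonempty sequence gives a product strictly less than $1$). The key point is that every numerator $a_i-1$ satisfies $a_i-1\leq p-1<p$, so $p$ divides no numerator, whereas $p$ divides the denominator $\prod_i a_i$ exactly $m$ times, where $m$ is the number of indices $i$ with $a_i=p$. Hence the $p$-adic valuation of $\prod_i\frac{a_i-1}{a_i}$ equals $-m$. Running the identical computation on the $\beta$-side, where $p$ occurs with some multiplicity $m'$, shows that the valuation there is $-m'$. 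Since the two products are equal, their $p$-adic valuations agree, whence $m=m'$: the top prime occurs with the same multiplicity in $\alpha$ and in $\beta$.

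Having matched the largest prime, I would cancel it by dividing both products by $\bigl(1-\tfrac1p\bigr)^{m}\neq 0$, obtaining two equal products over the truncated sequences $\alpha'$ and $\beta'$ formed by deleting all $m$ copies of $p$. These involve only primes strictly smaller than $p$, so the argument closes by induction on the largest prime present (equivalently on $r+s$): the inductive hypothesis gives $\alpha'=\beta'$, and reinserting the $m$ copies of $p$ yields $\alpha=\beta$. I do not anticipate a genuine obstacle here; the only step needing care is the valuation bookkeeping, namely making explicit that no numerator $a_i-1$ is divisible by $p$ when $p$ is maximal, which is exactly what renders the multiplicity of $p$ visible in the reduced denominator. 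Everything beyond that is routine induction.
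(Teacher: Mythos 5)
Your proof is correct and follows essentially the same route as the paper's: both identify the largest prime $p$, observe that $p$ divides no numerator $a_i-1$ or $b_j-1$ so its multiplicity on each side is visible, conclude the multiplicities agree, cancel, and induct. Your $p$-adic valuation phrasing is a minor streamlining that even absorbs the special case (all $a_i=p$) which the paper treats separately, but the underlying argument is the same.
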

\begin{proof}
	By induction on $r+s.$ If $r+s=2$, then $r=s=1$ and the statement is trivial. So suppose $r+s > 2$. We have
	\begin{equation}\label{rs}
		\prod_{1\leq i\leq r} a_i\prod_{1\leq j\leq s}(b_j-1)=\prod_{1\leq i\leq r} (a_i-1)\prod_{1\leq j\leq s}b_j.
	\end{equation}
	Let $p=\max\{a_1,\dots,a_r,b_1,\dots,b_s\}$, $r^*=\max\{i \mid a_i\neq p\}$, $s^*=\max\{j \mid b_j\neq p\}$. 
 Note that we can assume that both $r^*$ and $s^*$ exist. Indeed, if for example $a_i = p$ for every $1 \leq i \leq r$, then from (\ref{rs}) we get 
 \begin{equation*}
		p^r\prod_{1\leq j\leq s}(b_j-1)= (p-1)^r\prod_{1\leq j\leq s}b_j;
	\end{equation*} 
 thus $\beta$ contains exactly $r$ primes equal to $p$ and if $\alpha \neq \beta$ then
 	$\prod_{b_j \neq p}(b_j-1)=\prod_{b_j \neq p}b_j$,
a contradiction.

 Now, since $p$ does not divide $a_i-1$ nor $b_j-1$, it divides $a_i$ if and only if $i>r^*$ and divides $b_j$ if and only if $j>s^*.$ We deduce that $r-r^*$ is the multiplicity of $p$ in the left term of (\ref{rs}) and $s-s^*$ is the multiplicity of $p$ in the right term of (\ref{rs}). In particular $r-r^*=s-s^*$ and $a_{r^*+1}=\dots=a_r=b_{s^*+1}=\dots=b_s=p.$ But then
	$$\prod_{i\leq r^*}\left(1-\frac{1}{a_i}\right)=\prod_{j\leq s^*}\left(1-\frac{1}{b_j}\right),$$ and we conclude by induction.
\end{proof}

Now we need some information on the degrees of the vertices of the generating graph of a finite nilpotent group. From now on, let $\Gamma(G)$ denote the generating graph of $G$.

\begin{lem}\label{gradinilpo}
Let $G$ be a 2-generated, non cyclic,  finite nilpotent group. Let $\pi(G)$ be the set of prime divisors of $G,$ $\pi_1(G)=\{p_1,\dots,p_r\}$ the set of the primes $p\in \pi(G)$ such that the Sylow $p$-subgroup of $G$ is cyclic and $\pi_2(G)=\{q_1,\dots,q_s\}$ the set of the remaining primes. For every subset $I$ of $\{1,\dots,r\}$, let 	$$\begin{aligned}\alpha_I&=|G|\prod_{1\leq j\leq s}\left(1-\frac{1}{q_j^2}\right)\prod_{i \in I}\left(1-\frac{1}{p_i}\right)\prod_{i \notin I}\frac{1}{p_i},\\ \beta_I&=|G|\prod_{1\leq j\leq s}\left(1-\frac{1}{q_j}\right)\prod_{i \notin I}\left(1-\frac{1}{p_i}\right).
	\end{aligned}$$

For $g\in G$, denote by $\delta_G(g)$ the degree of $g$ in the generating graph of $G$. If $g$ is a non-isolated vertex, then $\delta_G(g)=\beta_I$ for some subset $I$ of $\{1,\dots,r\}$. Moreover for every $I\subseteq \{1,\dots,r\}$, the generating graph of $G$ contains precisely $\alpha_I$ vertices of degree $\beta_I.$
\end{lem}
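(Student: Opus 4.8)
The plan is to exploit the nilpotency of $G$ to reduce the computation of $\delta_G(g)$ to a product of local contributions coming from the Sylow subgroups. Write $G=P_1\times\cdots\times P_r\times Q_1\times\cdots\times Q_s$ as the direct product of its Sylow subgroups, where $P_i$ is the cyclic Sylow $p_i$-subgroup for $p_i\in\pi_1(G)$ and $Q_j$ is the Sylow $q_j$-subgroup for $q_j\in\pi_2(G)$. Since $G$ is $2$-generated and non-cyclic, each $Q_j$ is a non-cyclic $2$-generated $q_j$-group, so $Q_j/\Phi(Q_j)\cong C_{q_j}\times C_{q_j}$. Because the Sylow subgroups have pairwise coprime orders and are characteristic, a pair of elements generates $G$ if and only if its images generate each Sylow factor; consequently, writing $g=(g_1,\dots,g_r,g_1',\dots,g_s')$ for the components of $g$,
$$\delta_G(g)=\prod_{i=1}^r N_{P_i}(g_i)\cdot\prod_{j=1}^s N_{Q_j}(g_j'),$$
where for a $p$-group $S$ and $a\in S$ I write $N_S(a)$ for the number of $b\in S$ with $\langle a,b\rangle=S$. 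The tuple $h=g$ is automatically excluded from this product, since it would force some $g_j'$ to generate the non-cyclic $Q_j$ on its own; hence no correction term is needed.

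Next I would evaluate the two types of local factors. For a cyclic $p$-group $P$ the proper subgroups all lie in the unique maximal subgroup $\Phi(P)$, so $\langle a,b\rangle=P$ fails precisely when both $a,b\in\Phi(P)$; hence $N_P(a)=|P|$ if $a\notin\Phi(P)$ and $N_P(a)=|P|(1-1/p)$ if $a\in\Phi(P)$. For a non-cyclic $2$-generated $q$-group $Q$, the Burnside basis theorem gives $\langle a,b\rangle=Q$ if and only if the images of $a,b$ generate $Q/\Phi(Q)\cong C_q\times C_q$; thus $N_Q(a)=0$ when $a\in\Phi(Q)$, while for $a\notin\Phi(Q)$ one counts the images lying outside the line through the image of $a$ and multiplies by $|\Phi(Q)|$, obtaining $N_Q(a)=|Q|(1-1/q)$. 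In particular $g$ is a non-isolated vertex if and only if $g_j'\notin\Phi(Q_j)$ for every $j$.

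Assembling these contributions for a non-isolated vertex $g$, and setting $I=I(g):=\{\,i:g_i\notin\Phi(P_i)\,\}$, the $Q_j$-factors contribute the fixed quantity $\prod_j|Q_j|(1-1/q_j)$, while the $P_i$-factors contribute $\prod_i|P_i|\cdot\prod_{i\notin I}(1-1/p_i)$; multiplying out yields exactly $\delta_G(g)=\beta_I$. To count the vertices producing a given $I$, I would multiply the number of admissible components: $|P_i|(1-1/p_i)$ choices for $g_i$ when $i\in I$, $|\Phi(P_i)|=|P_i|/p_i$ choices when $i\notin I$, and $|Q_j|(1-1/q_j^2)=|Q_j|-|\Phi(Q_j)|$ choices for each $g_j'$ (forced outside $\Phi(Q_j)$). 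The product of these counts is precisely $\alpha_I$.

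Finally, to upgrade ``number of vertices with $I(g)=I$'' to ``number of vertices of degree $\beta_I$'', I must ensure that distinct subsets $I$ yield distinct degrees $\beta_I$. Since the $Q_j$-contribution is common to all non-isolated vertices, $\beta_I=\beta_{I'}$ forces $\prod_{i\notin I}(1-1/p_i)=\prod_{i\notin I'}(1-1/p_i)$, and here Lemma \ref{unfat} applies: as the $p_i$ are pairwise distinct primes, it forces the two index complements to coincide, whence $I=I'$. This is the step I expect to be the genuinely delicate point, and it is exactly what Lemma \ref{unfat} is designed to handle; the local factor computations, by contrast, are routine once the coprime factorization of the generating condition is in place.
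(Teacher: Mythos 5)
Your proof is correct, and in substance it travels the same road as the paper's: both arguments rest on the fact that generation in a non-cyclic $2$-generated nilpotent group is detected by the Frattini quotient, which splits as a product of coprime elementary abelian pieces ($C_{p_i}$ for the indices with cyclic Sylow subgroup, $C_{q_j}\times C_{q_j}$ for the others). The difference is organizational: the paper passes to $G/\Phi(G)$ at once and reads everything off the order of $g\Phi(G)$, whereas you split $G$ into its Sylow factors first and compute the local counts $N_{P_i}$, $N_{Q_j}$; this is equivalent but more explicit, and your coprimality argument for why generation factors through the Sylow projections, together with the Burnside basis theorem applied to each $Q_j$, are precisely the details the paper compresses into a single sentence. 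The one place where you go beyond the paper's written proof is the last step: you observe that the claim ``precisely $\alpha_I$ vertices of degree $\beta_I$'' requires the map $I\mapsto\beta_I$ to be injective, and you obtain this from Lemma~\ref{unfat} applied to the complementary index sets. The paper's three-line proof leaves this point entirely implicit (it only invokes Lemma~\ref{unfat} later, in the proof of Theorem~\ref{super}), so on this point your version is the more complete one. Your implicit use of $s\geq 1$ when discarding the pair $h=g$ is harmless, since a non-cyclic nilpotent group necessarily has a non-cyclic Sylow subgroup; you could alternatively just note that $\langle g\rangle\neq G$ because $G$ is not cyclic.
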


\begin{proof} An element $g\in G$ is not isolated in $\Gamma(G)$ if and only if $g\Phi(G)$ is not isolated in $\Gamma(G/\Phi(G))$, and this occurs if and only if $q_1\cdots q_s$ divides $|g\Phi(G)|$.
Given $I\subseteq \{1,\dots,r\},$ there are precisely $\alpha_I$ elements $g$ such that $|g\Phi(G)|=q_1\cdots q_s\prod_{i \in I}p_i.$ All these elements have degree $\beta_I$.
\end{proof}

Now we collect some information on the generating graph of a 2-generated supersoluble group. 

\begin{lem}\label{supuno}
Assume that $X$ is a finite 2-generated supersoluble group and that $\Phi(X)=1.$ Then
$$X \cong (V_1\times \dots \times V_t)\rtimes Y$$ where $Y$ is abelian and $V_1,\dots,V_r$ are pairwise non $Y$-isomorphic nontrivial irreducible $Y$-modules.
\end{lem}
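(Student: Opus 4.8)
The plan is to realise $X$ as a split extension of its Fitting subgroup by an abelian complement, and then to control the $Y$-module structure of that subgroup using the hypothesis that $X$ is $2$-generated. First I would pin down the split structure. Since $X$ is supersoluble, hence soluble, and $\Phi(X)=1$, a standard argument shows that $\fit(X)$ is abelian: as $\fit(X)\trianglelefteq X$ we have $\Phi(\fit(X))\le\Phi(X)=1$, and a nilpotent group with trivial Frattini subgroup is elementary abelian. Thus $F:=\fit(X)$ is abelian and coincides with the socle $\mathrm{Soc}(X)$, a direct product of minimal normal subgroups of $X$; because $\Phi(X)=1$ this socle is complemented, giving $X=F\rtimes Y_0$ with $Y_0\cong X/F$. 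Supersolubility forces every minimal normal subgroup of $X$ to have prime order, so each irreducible constituent of the semisimple $Y_0$-module $F$ is one-dimensional over the relevant prime field $\mathbb F_p$, and its endomorphism field is just $\mathbb F_p$.

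Next I would show that $Y_0$ is abelian. Fixing a chief series $1=X_0<X_1<\dots<X_n=X$ with cyclic factors $X_i/X_{i-1}\cong C_{p_i}$, conjugation yields homomorphisms $\rho_i\colon X\to\aut(C_{p_i})\cong C_{p_i-1}$ into abelian groups, so the normal subgroup $K:=\bigcap_i\ker\rho_i$ contains $X'$. On the other hand $K$ stabilises the chief series, so by the stability group theorem $K$ is nilpotent, and being normal it lies in $F$. Hence $X'\le K\le F$, so $X/F$, and therefore $Y_0$, is abelian.

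Then I would peel off the trivial part of the module $F$. Writing $F=F^{Y_0}\oplus[F,Y_0]$ as $Y_0$-modules, the summand $C:=F^{Y_0}$ is the sum of the trivial constituents while $F':=[F,Y_0]$ is the sum of the nontrivial ones. Since $C$ is centralised by $Y_0$ and by the abelian group $F$, it lies in $Z(X)$; as $C\cap(F'\rtimes Y_0)=1$ and $C\cdot(F'\rtimes Y_0)=X$, one obtains a direct decomposition $X=C\times(F'\rtimes Y_0)$. Absorbing $C$ into the complement and setting $Y:=C\times Y_0$, which is still abelian, yields $X=F'\rtimes Y$ with $F'$ a sum of nontrivial irreducible $Y$-modules; moreover two such constituents are $Y$-isomorphic if and only if they are $Y_0$-isomorphic, since $C$ acts trivially on $F'$. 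It then remains only to check that no nontrivial irreducible type occurs with multiplicity $\ge2$ in $F'$, which is exactly where $2$-generation is used and is the main obstacle.

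For this last point I would invoke the quantitative generation (crown) bound: if $X$ is $d$-generated with $\Phi(X)=1$ and $W$ is a nontrivial irreducible constituent of $\mathrm{Soc}(X)$ with endomorphism field $E$, then the multiplicity of $W$ in $\mathrm{Soc}(X)$ is at most $(d-1)\dim_E W+\dim_E H^1(X/C_X(W),W)$. Here $\dim_E W=1$, and $X/C_X(W)$ embeds into $\aut(C_p)\cong C_{p-1}$, a $p'$-group, whence $H^1(X/C_X(W),W)=0$; with $d=2$ the bound gives multiplicity $\le1$. Consequently the nontrivial constituents of $F'$ are pairwise non-$Y$-isomorphic, and writing $F'=V_1\times\dots\times V_t$ produces the desired form $X\cong(V_1\times\dots\times V_t)\rtimes Y$. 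To keep this step self-contained one can instead verify directly that $(W\oplus W)\rtimes Y_0$, with $Y_0$ acting by a single nontrivial scalar character on $\mathbb F_p^2$, cannot be $2$-generated, because any two elements generate a subgroup meeting $\mathbb F_p^2$ in a submodule of dimension at most one; this is the concrete phenomenon underlying the crown inequality.
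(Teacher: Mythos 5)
Your proposal is correct and takes essentially the same route as the paper's proof: complement $\fit(X)$ (possible since $\Phi(X)=1$), note the complement is abelian because supersolubility gives $X'\le \fit(X)$ (you reprove this via stability groups, the paper just cites it), absorb the trivial constituents---the center---into the complement, and deduce multiplicity one from $2$-generation; the only real difference is that you justify this last step (via the crown bound $(d-1)\dim_E W+\dim_E H^1(X/C_X(W),W)$ and the vanishing of $H^1$ for the $p'$-group $X/C_X(W)$), whereas the paper asserts it without proof. One small slip in your backup argument: it is not true that \emph{any} two elements of $(W\oplus W)\rtimes Y_0$ generate a subgroup meeting $\mathbb{F}_p^2$ in dimension at most one (two independent vectors of $\mathbb{F}_p^2$ already generate all of it); but a generating pair must have some projection acting nontrivially on $W\oplus W$, and after conjugating that generator into $Y_0$ by an element of $\mathbb{F}_p^2$ every word in the pair has $\mathbb{F}_p^2$-component in the line spanned by the other generator's vector part, so the intended conclusion---and hence your whole argument---stands.
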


\begin{proof}
The Fitting subgroup $\fit(X)$ of $X$ is a direct product of minimal normal subgroups of $X$, and is complemented in $X$. Let $T$ be a complement of $\fit(X)$ in $X$. Since $X$ is supersoluble, $X^\prime \leq \fit(X),$ and consequently $T$ is abelian. 
Let $W$ be a complement of $Z(X)$ in $\fit(X).$ Then $X=W \rtimes Y,$ with $Y=\langle T, Z(X)\rangle.$ We may decompose $W=V_1^{n_1}\times \dots V_t^{n_t},$ 
where	$V_1,\dots,V_r$ are pairwise non $Y$-isomorphic nontrivial irreducible $Y$-modules. The condition that $X$ is 2-generated implies that $n_1=\dots =n_t=1.$
\end{proof}

\begin{lem}\label{supdue} Let $X= (V_1\times \dots \times V_t)\rtimes Y$ be as in Lemma \ref{supuno}. For $1\leq i\leq t,$ let $|V_i|=r_i$ (since $X$ is supersoluble, $r_i$ is a prime). Assume that $x=(v_1,\dots,v_t)y$ is a non-isolated vertex of the generating graph of $X$ and let $J_y=\{j\in \{1\dots,t\}\mid [y,V_j]\neq 0\}.$
	Then
	\begin{equation}\label{grad}\delta_X(x)=\delta_Y(y)\prod_{j\notin J_y}r_j\prod_{j\in J_y}(r_j-1).\end{equation}
\end{lem}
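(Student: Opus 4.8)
The plan is to exploit the subdirect structure of $X$ coming from the pairwise non-isomorphic modules $V_j$, reduce the condition $\langle x,z\rangle = X$ to a family of independent local conditions (one for the quotient $Y$ and one for each $V_j$), and then count. Write $V = V_1\times\cdots\times V_t$, so that $X/V\cong Y$ and the image of $x=(v_1,\dots,v_t)y$ in $Y$ is $y$. For a general $z=(w_1,\dots,w_t)y'\in X$, set $H=\langle x,z\rangle$. Then $H$ surjects onto $Y$ precisely when $\langle y,y'\rangle=Y$, and in that case $H\cap V$ is an $X$-submodule of $V$; since the $V_j$ are pairwise non-$Y$-isomorphic irreducibles, $H\cap V=\bigoplus_{j\in S}V_j$ for some $S\subseteq\{1,\dots,t\}$. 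Hence $H=X$ if and only if $\langle y,y'\rangle=Y$ and $V_j\le H$ for every $j$, and the last condition can be tested in the quotient $X_j:=X/\prod_{i\neq j}V_i\cong V_j\rtimes Y$: namely $V_j\le H$ iff the images $\bar x_j=v_jy$ and $\bar z_j=w_jy'$ generate $X_j$. This decouples the problem across $j$.

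The engine of the count is a commutator computation in $X_j$. Writing the action of $g\in Y$ on $V_j\cong\mathbb{F}_{r_j}$ as multiplication by a scalar $\lambda_j(g)\in\mathbb{F}_{r_j}^\times$, one obtains
\[
[\bar x_j,\bar z_j]=\bigl((1-\lambda_j(y'))v_j+(\lambda_j(y)-1)w_j,\,1\bigr)\in V_j .
\]
Because $V_j$ is an irreducible $Y$-module and $X_j$ is non-abelian, $\langle\bar x_j,\bar z_j\rangle\cap V_j$ is either $0$ or $V_j$; so, assuming $\langle y,y'\rangle=Y$, generation of $X_j$ is equivalent to $\langle\bar x_j,\bar z_j\rangle\cap V_j\neq 0$, and I would verify that this happens exactly when the displayed commutator is nonzero (the nontrivial direction being that a vanishing commutator forces $\langle\bar x_j,\bar z_j\rangle$ to be a complement of $V_j$). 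Two cases then remain. If $j\in J_y$ then $\lambda_j(y)\neq 1$, the coefficient of $w_j$ is invertible, and exactly the $w_j$ avoiding one value make the commutator nonzero, giving $r_j-1$ choices. If $j\notin J_y$ then $\lambda_j(y)=1$, and since $\langle y,y'\rangle=Y$ while $\lambda_j$ is nontrivial we must have $\lambda_j(y')\neq 1$; the commutator equals $((1-\lambda_j(y'))v_j,1)$, independent of $w_j$. Here the non-isolation hypothesis is essential: were $v_j=0$ for some $j\notin J_y$, then $\bar x_j$ would centralize $V_j$ and $x$ would be isolated, so in fact $v_j\neq 0$, the commutator is nonzero for all $w_j$, and all $r_j$ choices work.

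Finally I would assemble the pieces. For each $y'$ with $\langle y,y'\rangle=Y$, the admissible tuples $w=(w_1,\dots,w_t)$ factor as a product over $j$ of the local counts, namely $\prod_{j\notin J_y}r_j\prod_{j\in J_y}(r_j-1)$, a quantity independent of $y'$; summing over the admissible $y'$ contributes the factor $\delta_Y(y)$ and yields (\ref{grad}). The main obstacle is the local analysis in $X_j$: establishing that $\langle\bar x_j,\bar z_j\rangle\cap V_j\neq 0$ is equivalent to nonvanishing of the commutator, so that no hidden element of $V_j$ enters the subgroup, and pinning down where the non-isolation hypothesis forces $v_j\neq 0$ in the case $j\notin J_y$. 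A secondary delicate point is the bookkeeping of the $Y$-factor, matching the number of $y'$ with $\langle y,y'\rangle=Y$ to the degree $\delta_Y(y)$, which requires care in the treatment of the case $\langle y\rangle=Y$.
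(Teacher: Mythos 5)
Your proof is correct, and its counting core is the same as the paper's: generation of $X$ by $x$ and $z=(w_1,\dots,w_t)y'$ is reduced to the condition $\langle y,y'\rangle=Y$ together with the nonvanishing, for each $j$, of the linear form $(1-\lambda_j(y'))v_j+(\lambda_j(y)-1)w_j$; one then counts $r_j-1$ admissible values of $w_j$ when $j\in J_y$ and, using non-isolation to force $v_j\neq 0$, all $r_j$ values when $j\notin J_y$. The difference lies in how this criterion is obtained. The paper imports it as a black box from Propositions 2.1 and 2.2 of \cite{LM2}, where it appears as the nonvanishing of the determinant $(1-\alpha_j(y))\tilde v_j-(1-\alpha_j(\tilde y))v_j$, which is exactly your commutator up to a unit factor. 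You instead prove it: since the $V_j$ are pairwise non-$Y$-isomorphic irreducibles, $H\cap V$ is an $X$-submodule and hence a partial sum $\bigoplus_{j\in S}V_j$, which decouples the problem into the quotients $X_j=V_j\rtimes Y$; there your dichotomy (a vanishing commutator makes $\langle \bar x_j,\bar z_j\rangle$ abelian, hence a proper subgroup of the non-abelian $X_j$, while a nonvanishing commutator puts a nonzero element of $V_j$ into $\langle \bar x_j,\bar z_j\rangle$, which by irreducibility and surjection onto $Y$ forces generation) settles the local question. Your route buys self-containedness; the paper's is shorter but rests on the cited result.

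Both of the obstacles you flag are harmless. For the first, your dichotomy already closes the gap: when $j\notin J_y$ and $v_j=0$, the element $\bar x_j=y$ centralizes $V_j$ (as $\lambda_j(y)=1$) and also centralizes the abelian group $Y$, so it is central in $X_j$; thus the commutator vanishes for every $\bar z_j$ and $x$ is isolated. For the second, the bookkeeping of the $Y$-factor is settled by the degree convention the paper uses throughout (visible in the derivation of (\ref{gradi2}) in the proof of Theorem \ref{super}, where $\delta_Y(y)=|Y|\prod_{u\in\omega}\left(1-\frac{1}{u}\right)$ with $\omega$ the set of primes dividing $|Y/\langle y\rangle|$): namely, $\delta_Y(y)$ counts all $y'$ with $\langle y,y'\rangle=Y$, including $y'=y$ when $\langle y\rangle=Y$. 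Under this convention your sum over admissible $y'$ is exactly $\delta_Y(y)$, with no correction term. This convention is in fact forced if (\ref{grad}) is to hold literally: for $X\cong S_3$, $Y=\langle y\rangle\cong C_2$ and $x=y$ a transposition, one has $\delta_X(x)=4$, so (\ref{grad}) requires $\delta_Y(y)=2$. (Note also that $\langle y\rangle=Y$ automatically yields $J_y=\{1,\dots,t\}$, since $\lambda_j(Y)=\langle\lambda_j(y)\rangle$ must be nontrivial, so no separate case analysis is needed there.)
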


\begin{proof}
For $1\leq i \leq t,$ we may identify $V_i$ with the additive group of the field $F_i$ with $r_i$ elements. For every $z\in Y$ and $1\leq i \leq t,$ there exists $\alpha_i(z)\in F_i$ such that $w_i^z=\alpha_i(z)w_i$ for all $w_i\in V_i.$
Let $\tilde x=(\tilde v_1,\dots,\tilde v_t)\tilde y \in X.$ 
It follows from Propositions 2.1 and
2.2 in \cite{LM2}, that
 $\langle x, \tilde x\rangle =X $ if and only if $\langle y, \tilde y\rangle =Y$
 and
$$\delta_i(x,\tilde x):=\det\begin{pmatrix}1-\alpha_i(y)&1-\alpha_i(\tilde y)\\v_i&\tilde v_i
\end{pmatrix}\neq 0 \quad \text { for all $i\in \{1,\dots,t\}$}.$$
If $i\notin J_y,$ then $\alpha_i(y)=1$. Since $\langle y, \tilde y\rangle=Y$ and $[Y,V_i]=V_i,$ it must be that $\alpha_i(\tilde y)\neq 1$,
and therefore $\delta_i(x,\tilde x)\neq 0$
if and only if $v_i\neq 0,$ independently on the choice of $\tilde v_i.$
If $i\in J_y,$ then, for every choice of $v_i$ and $y,$ the probability that $\tilde v_i$ satisfies the condition
$\det\begin{pmatrix}1-\alpha_i(y)&1-\alpha_i(\tilde y)\\v_i&\tilde v_i
\end{pmatrix}\neq 0$
coincides with $1-1/r_i$.

 We conclude that $x$ is not isolated in $\Gamma(X)$ if and only if $v_i\neq 0$ for every $i \notin J_y.$ Moreover if $x$ is not isolated, then (\ref{grad}) holds.
		\end{proof}

\begin{proof}[Proof of Theorem \ref{super}]
If $H$ is cyclic, then $H$ is nilpotent and there is nothing to prove. So assume $H$ is not cyclic	
and let $n=|G|=|H|$. Moreover assume that $\pi_1(G)$ and $\pi_2(G)$ are as described in Lemma \ref{gradinilpo} and that $X=H/\Phi(H)$ is as described in Lemmas \ref{supuno} and \ref{supdue}.

For every element $h\in H$, we write $h\Phi(H)=w_hy_h$ with
$w_h \in V_1\times \dots \times V_t$ and $y_h\in Y$. Since $\langle h_1,h_2\rangle=H$ if and only if $\langle h_1,h_2\rangle\Phi(H)=H$,
it follows from Lemma \ref{supdue} that
	\begin{equation}\label{grado}\delta_H(h)=\frac{n\cdot \delta_Y(y_h)}{|Y|}\prod_{j\notin J_{y_h}}r_j\prod_{j\in J_{y_h}}(r_j-1).
\end{equation} 
To reach our conclusion, we will prove a series of consecutive claims.

\

\noindent Claim 1. {\sl{Let $y\in Y$ be such that $Y/\langle y\rangle$ is cyclic and let $\omega$ be the set of prime divisors of $|Y/\langle y\rangle|$. Then $\{r_j \mid j\in J_y\} \cap \omega = \emptyset.$ Moreover
if $j_1, j_2 \in J_y$ and $j_1\neq j_2,$ then $r_{j_1}\neq r_{j_2}.$
}}

Indeed there exists a non isolated vertex $h$ in $\Gamma(H),$ with $y=y_h,$ and by (\ref{grado}),
\begin{equation}\label{gradi2}
	\delta_H(h)=n\prod_{j\in J_y}\left(1-\frac{1}{r_j}\right)\prod_{u\in \omega}\left(1-\frac{1}{u}\right).\end{equation}
Since $\Gamma(G)\cong \Gamma(H)$, there exists $g\in G$ with $\delta_H(h)=\delta_G(g).$ 
It follows from Lemma \ref{gradinilpo}, that 
there exists $I\subseteq \{1, \dots, r\}$ such that
\begin{equation}\label{confronto}\prod_{j\in J_y}\left(1-\frac{1}{r_j}\right)\prod_{u\in \omega}\left(1-\frac{1}{u}\right)=\prod_{1\leq j\leq s}\left(1-\frac{1}{q_j}\right)\prod_{i \notin I}\left(1-\frac{1}{p_i}\right).\end{equation}
The factors in the right term of  (\ref{confronto}) are all distinct. By Lemma \ref{unfat} the same must be true for the left term, and this implies that Claim 1 is true.

\

\noindent Claim 2. {\sl{The prime $r_i$ does not divide $|Y|,$ for every $1\leq i\leq t.$}}

Indeed, assume by contradiction that $r_i$ divides $|Y|.$ Since $Y$ is a 2-generated abelian group and $Y/C_Y(V_i)\leq \aut(V_i)$ is cyclic of order dividing $r_i-1,$ there exists $y\in Y$ such that $i\in J_y$, $Y/\langle y\rangle$ is cyclic
and $r_i$ divides $|Y/C_Y(V_i)|,$ in contradiction with Claim 1.

\

\noindent Claim 3. {\sl{If $1\leq i < j\leq t,$ then $r_i\neq r_j$}}. 

Assume  $i\neq j.$ We can find $y\in Y$ such that
$\{i,j\}\subseteq  J_y$ and $Y/\langle y \rangle$ is cyclic (indeed let $Y=\langle y_1, y_2 \rangle$: at least one of the three elements $y_1, y_2, y_1y_2$ does not centralizes neither $V_i$ nor $V_j$ and this is the element we need). By Claim 1 it must be $r_i\neq r_j.$

\

Now let $\pi_1=\{r_1,\dots,r_t\},$ $\pi_2$ be the set of the prime  divisors $p$ of $|Y|$ such that the Sylow $p$-subgroup of $Y$ is cyclic, $\pi_3$ the set of the remaining prime divisors of $|Y|.$ We have proved that $\pi=\pi(G)=\pi(H)$ is the disjoint union of $\pi_1,$ $\pi_2$ and $\pi_3.$

\

In particular it follows from (\ref{grado}) and Claim 3 that every non isolated vertex $h\in \Gamma(H)$ uniquely determines a subset $\pi_h$ of $\pi$ such that $$\delta_H(h)=n\prod_{p\in \pi_h}\left(1-\frac{1}{p}\right).$$ Since the degrees in $\Gamma(G)$ and $\Gamma(H)$ are the same, denoting by $\Lambda(H)$ the set of the non isolated vertices of $\Gamma(H),$ 
it follows from Lemma \ref{gradinilpo} that
\begin{equation}\label{interseco}
\pi_2(G)=\bigcap_{h\in \Lambda(H)}\pi_h.
\end{equation}
Let $r_i\in \pi_1.$ Since $\aut(V_i)$ is cyclic, there exists $y\in Y$ such that $i\notin J_y$ and $Y/\langle y\rangle$ is cyclic. Moreover there exists $h\in \Lambda(H)$ with $y_h=y.$  By (\ref{gradi2}) and Claim 2, $r_i\notin \pi_h,$ and therefore, by (\ref{interseco}), $r_i\notin \pi_2(G).$ Hence
\begin{equation}\label{11}
	\pi_1\subseteq  \pi_1(G).
\end{equation}
If $r\in \pi_2,$ then there exists $y\in Y$ such that $Y/\langle y\rangle$ is cyclic and has order coprime with $r.$ As before, take $h\in \Lambda(H)$ with $y_h=y.$ By (\ref{gradi2}) and Claim 2, $r\notin \pi_h,$ hence $r\notin \pi_2(H).$ It follows that
\begin{equation}\label{12}
	\pi_2\subseteq  \pi_1(G).
\end{equation}
It follows easily from  (\ref{gradi2}), that if $p \in \pi_3,$ then $p \in \pi_h$ for every  $h\in \Lambda(H),$  hence
\begin{equation}\label{13}
	\pi_3 \subseteq \pi_2(G).
\end{equation}
Since $\pi$ is the disjoint union of $\pi_1, \pi_2, \pi_3,$ but also the disjoint union of $\pi_1(G)$ and $\pi_2(G),$ combining $(\ref{11}),$ $(\ref{12})$
and $(\ref{13}),$ we conclude 
$$\pi_1(G)=\pi_1\cup \pi_2 {\text { and }} \pi_2(G)=\pi_3.$$
Let $\eta_1, \eta_2$ be, respectively, the number of edges of $\Gamma(G)$ and $\Gamma(H).$  Notice that $2\eta_1=|G|^2P_G(2)$ and $2\eta_2=|H|^2P_H(2)$, where, given a 2-generated finite group $T$, we denote by $P_T(2)$ the probability that a pair of uniformly randomly chosen elements of $T$ generates $T$. Since $\Gamma(G)\cong \Gamma(H),$ we must have that $\eta_1=\eta_2$ and consequently $P_G(2)=P_H(2).$ It follows from \cite[Satz 4]{eul} that
$$\begin{aligned}P_H(2)&=\prod_{p\in \pi_1}\left(1-\frac{1}{p}\right)
\prod_{p\in \pi_2}\left(1-\frac{1}{p^2}\right)\prod_{p\in \pi_3}\left(1-\frac{1}{p}\right)\left(1-\frac{1}{p^2}\right),\\
P_G(2)&=
\prod_{p\in \pi_1(H)}\left(1-\frac{1}{p^2}\right)\prod_{p\in \pi_2(G)}\left(1-\frac{1}{p}\right)\left(1-\frac{1}{p^2}\right).\end{aligned}$$
Since $\pi_1\cup \pi_2=\pi_1(G)$ and $\pi_3=\pi_2(G),$  we deduce
$$\prod_{p\in \pi_1}\left(1+\frac{1}{p}\right)=1$$ and consequently $\pi_1=\emptyset.$ Therefore $H/\Phi(H)\cong Y$ is abelian and we conclude that $H$ is nilpotent.
\end{proof}

\subsection{An interesting subgraph}
In this subsection we aim to obtain some indications that information on the structure of a finite group $G$ can be obtained investigating the connectivity properties of the complement of the generating graph of $G$. Notice that the isolated vertices of the generating graph are joined to all the other vertices of the complement graph, so the complement of the generating graph is always connected. But an interesting graph is obtained if we consider the complement  $\Delta(G)$ of the subgraph of the generating graph induced by the subset of its non-isolated vertices. In other word the vertices of $\Delta(G)$ are the elements $x \in G$ with the property that $\langle x, y\rangle=G$ for some $y\in G$ and there is an edge between $x_1$ and $x_2$ if and only if $\langle x_1, x_2\rangle \neq G.$ Note that the vertex set of $\Delta(G)$ is contained in the set $G \backslash \Phi(G)$, but in general it can be smaller. For example, in $\Delta(S_4)$, the elements $(12)(34)$, $(13)(24)$ and $(14)(23)$ do not belong to the vertex-set.  We will show in particular that we have a positive answer to Question \ref{q1} with respect to the graph $\Delta(G)$, whenever it is disconnected.

We start analysing the graph $\Delta(G)$ when $G$ is either a cyclic group or a $p$-group.

\begin{lem}\label{ng.cyclic}
Let $G$ be a cyclic group of order $n$. Then $\Delta(G)$  contains $n$ vertices, and at least two of them are isolated.
\end{lem}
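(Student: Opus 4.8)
The plan is to split the statement into its two assertions: that $\Delta(G)$ has exactly $n$ vertices, and that at least two of these are isolated. Recall that by definition the vertices of $\Delta(G)$ are the non-isolated vertices of the generating graph $\Gamma(G)$, and that $x_1,x_2$ are adjacent in $\Delta(G)$ precisely when $\langle x_1,x_2\rangle\neq G$. Write $G=\langle g\rangle$ with $|g|=n$ and assume $n\geq 2$.

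For the first assertion I would show that \emph{every} element of $G$ is a non-isolated vertex of $\Gamma(G)$, so that the vertex set of $\Delta(G)$ is all of $G$. Given $x\in G$, it suffices to exhibit some $y\neq x$ with $\langle x,y\rangle=G$. If $x$ is not a generator, take $y$ to be any generator of $G$ (one exists since $n\geq 2$, and it differs from $x$); then $\langle x,y\rangle\supseteq\langle y\rangle=G$. If $x$ is a generator, take instead $y=1$, so that $\langle x,y\rangle=\langle x\rangle=G$ and $x\neq 1$. In either case $x$ is non-isolated, and hence $\Delta(G)$ has $n$ vertices.

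For the second assertion I would characterize isolation in $\Delta(G)$: a vertex $x$ is isolated in $\Delta(G)$ exactly when it is adjacent in $\Gamma(G)$ to every other vertex, i.e.\ when $\langle x,y\rangle=G$ for all $y\neq x$. Any generator $x$ of $G$ has this property, since $\langle x,y\rangle\supseteq\langle x\rangle=G$ for every $y$. Thus each of the $\varphi(n)$ generators of $G$ is an isolated vertex of $\Delta(G)$.

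Finally I would close by counting. Since $\varphi(n)\geq 2$ for every $n\geq 3$, the generators already provide at least two isolated vertices in all but the smallest case. The one genuinely separate point — and the only mild obstacle here — is the degenerate case $n=2$, where $\varphi(n)=1$ and the generator argument alone yields a single isolated vertex; in this case $\Delta(G)$ has exactly the two vertices $1$ and $g$, and since $\langle 1,g\rangle=G$ there is no edge between them, so \emph{both} are isolated. Hence in every case $\Delta(G)$ contains at least two isolated vertices, completing the proof.
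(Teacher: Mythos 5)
Your proof is correct and takes essentially the same route as the paper's: every element is shown to be a non-isolated vertex of the generating graph by pairing it with a generator (or with $1$ when it is itself a generator), and the isolated vertices of $\Delta(G)$ are exhibited among the generators, with $n=2$ treated as a separate case where $1$ and the generator are both isolated. The only cosmetic difference is that for $n\geq 3$ the paper names the two isolated vertices explicitly as $x$ and $x^{-1}$, whereas you invoke $\varphi(n)\geq 2$.
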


\begin{proof}
Suppose $G = \langle x \rangle$ is a cyclic group of order $n$. For every element $g \in G$, we have $\langle g, x \rangle = G$. Thus none of the elements of $G$ is a universal vertex, implying that $\Delta(G)$ contains $n$ vertices. Now, if $n=2$, then the elements $1$ and $x$ are isolated vertices, while for $n\geq 3$, the elements $x$ and $x^{-1}$ are distinct isolated vertices. So in any case there are at least two isolated vertices.
\end{proof}

\begin{lem}\label{ng.p.group}
   Let  $p$ be a prime and let $P$ be a $2$-generated $p$-group of order $p^n$ that is not cyclic. Then $\Delta(P)$ contains $p+1$ connected components, each one complete and containing $p^{n-2}(p-1)$ vertices. In particular $\Delta(P)$ is disconnected and contains isolated vertices if and only if $P \cong C_2 \times C_2$.
\end{lem}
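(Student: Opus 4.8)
The plan is to reduce everything to linear algebra over $\mathbb{F}_p$ by passing to the Frattini quotient. Since $P$ is a $2$-generated non-cyclic $p$-group, the Burnside basis theorem gives $V:=P/\Phi(P)\cong C_p\times C_p$, which I regard as a $2$-dimensional vector space over $\mathbb{F}_p$. For any $x,y\in P$ one has $\langle x,y\rangle=P$ if and only if $\langle x,y\rangle\Phi(P)=P$, i.e. if and only if the images $\bar x,\bar y$ generate $V$, equivalently are linearly independent in $V$. In particular $x$ is a non-isolated vertex of the generating graph exactly when $\bar x\neq 0$: if $\bar x\neq 0$ one completes it to a basis and lifts to get a generating pair, while if $x\in\Phi(P)$ then $\langle \bar x,\bar y\rangle$ is at most $1$-dimensional for every $y$. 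Hence the vertex set of $\Delta(P)$ is precisely $P\setminus\Phi(P)$, of cardinality $p^n-p^{n-2}=p^{n-2}(p-1)(p+1)$.

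First I would translate the edge relation of $\Delta(P)$. For distinct $x_1,x_2\in P\setminus\Phi(P)$ there is an edge if and only if $\langle x_1,x_2\rangle\neq P$, which by the above happens exactly when $\bar x_1$ and $\bar x_2$ are linearly dependent, i.e. when they span the same $1$-dimensional subspace (line) of $V$. Since each nonzero vector lies on a unique line and $V$ has exactly $p+1$ lines, the map sending $x$ to the line $\mathbb{F}_p\bar x$ partitions $P\setminus\Phi(P)$ into $p+1$ blocks. The preimage of the $p-1$ nonzero vectors of a fixed line is a union of $p-1$ cosets of $\Phi(P)$, so each block has $p^{n-2}(p-1)$ elements.

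Next I would verify that these blocks are exactly the connected components and that each is complete. If $x_1,x_2$ lie in the same block then $\bar x_1,\bar x_2$ are dependent, hence adjacent, so each block is a clique. Conversely, if $x_1,x_2$ lie in different blocks then $\bar x_1,\bar x_2$ are independent, so $\langle x_1,x_2\rangle=P$ and there is no edge between them; thus no edge crosses between blocks and each block is a union of connected components. Being a clique, each block is a single complete component. This yields the asserted $p+1$ complete components, each of size $p^{n-2}(p-1)$.

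Finally, for the concluding clause: since $p+1\geq 3$, the graph $\Delta(P)$ always has at least three components, hence is disconnected. An isolated vertex is a component of size $1$, and as every component has the common size $p^{n-2}(p-1)$, the graph contains an isolated vertex precisely when $p^{n-2}(p-1)=1$, i.e. when $p=2$ and $n=2$; the only non-cyclic $2$-generated group of order $4$ is $C_2\times C_2$. I do not anticipate a real obstacle here: the single point requiring care is the reduction to $V$ via the Burnside basis theorem, namely that generation of a $p$-group is detected on the Frattini quotient, after which the whole argument is elementary linear algebra over $\mathbb{F}_p$.
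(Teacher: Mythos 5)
Your proof is correct and is essentially the paper's argument: the paper partitions the vertex set $P\setminus\Phi(P)$ into the sets $M_i\setminus\Phi(P)$ for the $p+1$ maximal subgroups $M_i$, which are exactly the preimages of your $p+1$ lines of $P/\Phi(P)$, and then makes the same two observations (cliques inside each block, no edges across blocks). Your version is just a linear-algebra rephrasing, and if anything slightly more careful, since you justify explicitly that the vertex set of $\Delta(P)$ is $P\setminus\Phi(P)$, a point the paper leaves implicit.
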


\begin{proof}
Note that $P$ contains $p+1$ maximal subgroups $M_1, \dots M_{p+1}$ and the following holds:
\begin{itemize}
    \item for every $x,y \in M_i$, $x\neq y$, we have $\langle x, y \rangle \leq M_i < P $, so $x$ and $y$ are joined; 
    \item for every $x \in M_i$, $y \in M_j$ with $i\neq j$, we have $\langle x, y \rangle = P $, so $x$ and $y$ are not joined. 
\end{itemize}
Therefore $\Delta(P)$ contains $p+1$ connected components, each one complete and containing $|M_i| - |\Phi(P)| = p^{n-2}(p-1)$ vertices. In particular $\Delta(P)$ contains isolated vertices if and only if $p^{n-2}(p-1)=1$, that is equivalent to $P \cong C_2 \times C_2$. 
\end{proof}

\begin{lem}
Suppose $G$ is a cyclic group and $P$ is a $2$-generated $p$-group that is not cyclic, for some prime $p$. Then $\Delta(G) \cong \Delta(P)$ if and only if $G\cong C_3$ and $P \cong C_2 \times C_2$.  
\end{lem}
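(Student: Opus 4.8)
The plan is to compare the two graphs using the structural information already established in Lemmas \ref{ng.cyclic} and \ref{ng.p.group}, treating the isomorphism $\Delta(G) \cong \Delta(P)$ as a source of rigid numerical constraints. First I would record the shape of each side: by Lemma \ref{ng.p.group} the graph $\Delta(P)$ is a disjoint union of $p+1$ complete components, each of size $p^{n-2}(p-1)$, while by Lemma \ref{ng.cyclic} the graph $\Delta(G)$ has exactly $|G|$ vertices, at least two of which are isolated. Since a graph isomorphism preserves the number of vertices, the number of connected components, the sizes of the components, and in particular the existence of isolated vertices, these two descriptions must match.

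The key observation is that $\Delta(P)$ contains an isolated vertex precisely when its components are singletons, i.e.\ when $p^{n-2}(p-1)=1$, which by Lemma \ref{ng.p.group} forces $P \cong C_2 \times C_2$. But $\Delta(G)$ always has at least one isolated vertex (indeed at least two), so under the isomorphism $\Delta(P)$ must have isolated vertices as well. This immediately pins down $P \cong C_2\times C_2$, for which $\Delta(P)$ consists of $p+1 = 3$ isolated vertices (three singleton components). Matching vertex counts then gives $|G| = 3$, so $G \cong C_3$; and one checks directly from Lemma \ref{ng.cyclic} that $\Delta(C_3)$ is indeed three isolated vertices (the vertices $1$, $x$, $x^{-1}$ are pairwise non-joined since each pair generates $C_3$), confirming $\Delta(C_3)\cong \Delta(C_2\times C_2)$ and establishing the ``if'' direction simultaneously.

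For the converse I would simply verify that when $G\cong C_3$ and $P\cong C_2\times C_2$ both graphs are the edgeless graph on three vertices, which is already contained in the forward computation. The only genuine subtlety, and the step I expect to require the most care, is handling the edge cases in the vertex-count comparison: I must make sure that the ``at least two isolated vertices'' of Lemma \ref{ng.cyclic} cannot be accommodated by a non-singleton-component configuration of $\Delta(P)$. This is clean because $\Delta(P)$ has \emph{no} isolated vertices unless all its components are singletons (each component being complete of uniform size $p^{n-2}(p-1)\geq 2$ otherwise), so the presence of any isolated vertex on the cyclic side forces the degenerate case on the $p$-group side. Once that dichotomy is secured, the remaining arithmetic ($p+1$ vertices equal to $|G|$, with $p=2$) is immediate and leaves no freedom, so no further case analysis is needed.
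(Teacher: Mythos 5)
Your proposal is correct and follows essentially the same route as the paper: use the isolated vertices guaranteed by Lemma \ref{ng.cyclic} together with the component structure from Lemma \ref{ng.p.group} to force $P \cong C_2 \times C_2$, then match vertex counts to conclude $G \cong C_3$. Your explicit verification of the ``if'' direction and of the uniform-component-size dichotomy is slightly more detailed than the paper's (which leaves both implicit), but the argument is the same.
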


\begin{proof}
By Lemma \ref{ng.cyclic}, $\Delta(G)$ contains isolated vertices. Hence by Lemma \ref{ng.p.group} we deduce that $P \cong C_2 \times C_2$. In particular $\Delta(G) \cong \Delta(P)$ consists of $3$ vertices and no edges.  Hence again by Lemma \ref{ng.cyclic} we conclude that $G \cong C_3$.  
\end{proof}

Next, we consider the dihedral group $D_{2p}$, for $p$ an odd prime:

\begin{lem}\label{ng.dihedral}  
Let $p$ be an odd prime. Then $\Delta(D_{2p})$ has $D_{2p} \backslash \{1\}$ as vertex-set and consists of one connected component of size $p-1$ that is complete and $p$ isolated vertices (corresponding to the $p$ involutions of $D_{2p}$).  
\end{lem}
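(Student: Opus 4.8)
The plan is to read off everything from the elementary subgroup lattice of $D_{2p}$. Write $D_{2p}=\langle r,s\mid r^p=s^2=1,\ srs^{-1}=r^{-1}\rangle$, so that the $2p$ elements split into the $p$ rotations $r^i$ ($0\le i\le p-1$) and the $p$ reflections $sr^i$ ($0\le i\le p-1$), the latter being exactly the involutions. Since $p$ is prime, the only proper nontrivial subgroups are the rotation subgroup $\langle r\rangle$ of order $p$ and the $p$ reflection subgroups $\langle sr^i\rangle$ of order $2$; these are precisely the maximal subgroups, and every nontrivial element lies in exactly one of them. Consequently a pair of elements generates $D_{2p}$ if and only if its two entries do not sit inside a common proper subgroup, which is the criterion I would use throughout.

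First I would pin down the vertex set of $\Delta(D_{2p})$, namely the non-isolated vertices of the generating graph $\Gamma(D_{2p})$. The identity is isolated because $\langle 1,y\rangle=\langle y\rangle$ is cyclic whereas $D_{2p}$ is not, so $1$ is excluded. Conversely every nontrivial element is non-isolated: a rotation $r^i$ with $i\neq 0$ satisfies $\langle r^i,s\rangle=D_{2p}$, using $\langle r^i\rangle=\langle r\rangle$, and a reflection $sr^i$ satisfies $\langle sr^i,r\rangle=D_{2p}$ since this subgroup contains $r$ and $(sr^i)r^{-i}=s$. Hence the vertex set of $\Delta(D_{2p})$ is $D_{2p}\setminus\{1\}$, as asserted.

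Next I would determine the edges, recalling that in $\Delta(D_{2p})$ two distinct vertices are adjacent exactly when they \emph{fail} to generate $D_{2p}$; this reduces to a three-case check. Two distinct nontrivial rotations $r^i,r^j$ both lie in $\langle r\rangle$, so $\langle r^i,r^j\rangle=\langle r\rangle\neq D_{2p}$ and they are adjacent. A nontrivial rotation $r^i$ together with a reflection $sr^j$ generates $D_{2p}$ (again $\langle r^i\rangle=\langle r\rangle$ plus a reflection), so these are non-adjacent. Finally, for two distinct reflections the key computation is $(sr^i)(sr^j)=r^{j-i}$, which is nontrivial when $i\neq j$; since $p$ is prime this forces $\langle sr^i,sr^j\rangle\supseteq\langle r\rangle$, and adjoining the reflection $sr^i$ gives the whole group, so distinct reflections are non-adjacent as well.

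Assembling these facts, the $p-1$ nontrivial rotations form a single complete connected component, each reflection is adjacent to no other vertex, and the rotation/reflection pairs and reflection/reflection pairs contribute no further edges; thus the $p$ reflections are isolated in $\Delta(D_{2p})$, yielding exactly one complete component of size $p-1$ together with $p$ isolated vertices. I do not expect a substantive obstacle here, as every step is a short computation in $D_{2p}$; the only point deserving explicit care is the mildly counterintuitive phenomenon that the reflections are non-isolated in $\Gamma(D_{2p})$ — hence genuine vertices of $\Delta(D_{2p})$ — yet become isolated inside $\Delta(D_{2p})$, so one must verify both that they belong to the vertex set and that they acquire no edges.
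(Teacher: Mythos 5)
Your proposal is correct and takes essentially the same route as the paper's proof: both identify the vertex set as $D_{2p}\setminus\{1\}$ (the identity being the unique element generating with nothing, since $D_{2p}$ is non-cyclic), show each involution generates $D_{2p}$ with every other nontrivial element and is therefore isolated in $\Delta(D_{2p})$, and observe that the $p-1$ nontrivial rotations lie in the unique Sylow $p$-subgroup $\langle r\rangle$, hence pairwise fail to generate and form a complete component. Your write-up merely makes explicit the subgroup-lattice bookkeeping and the computation $(sr^i)(sr^j)=r^{j-i}$ that the paper leaves to the reader.
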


\begin{proof}
Note that the trivial element is a universal vertex (as $D_{2p}$ is non-cyclic) and it is the only one, so $\Delta(D_{2p})$ contains $D_{2p} \backslash \{1\}$ as vertex-set. Now, all involutions of $D_{2p}$ are isolated vertices, as each of them generates $D_{2p}$ together with any other non-trivial element. Finally, if $P\in {\rm{Syl}}_p(D_{2p})$, then for every pair of distinct non-trivial elements $x,y \in P$ we have $\langle x, y \rangle = P < D_{2p}$, and so the non-trivial elements of $P$ form a connected component of size $p-1$, that is complete. 
\end{proof}

Lucchini and Nemmi characterized all finite $2$-generated groups $G$ for which the graph $\Delta(G)$ contains isolated vertices:

\begin{prop}\cite[Proposition 2]{Lucc.Nem.}\label{isolated}
Let G be a $2$-generated finite group. Then $\Delta(G)$ has an isolated vertex if and only if one of the following holds:
\begin{enumerate}
\item $G$ is cyclic;
\item $G \cong C_2 \times C_2$; or
\item $G \cong D_{2p}$ for an odd prime $p$.  
\end{enumerate}
\end{prop}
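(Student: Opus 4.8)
The plan is to translate the isolatedness condition into a statement purely about the generating graph and then classify. By definition a vertex $x$ of $\Delta(G)$ is isolated precisely when $\langle x,y\rangle=G$ for every other vertex $y$ of $\Delta(G)$; equivalently, $x$ lies in some generating pair and $x$ generates $G$ together with every non-isolated vertex of $\Gamma(G)$ distinct from $x$. The ``if'' direction is already contained in the preceding lemmas: Lemma \ref{ng.cyclic} covers cyclic groups, the computation in Lemma \ref{ng.p.group} shows that $C_2\times C_2$ has (three) isolated vertices, and Lemma \ref{ng.dihedral} exhibits the $p$ reflections as isolated vertices of $\Delta(D_{2p})$. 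So I would concentrate on the converse: assuming $\Delta(G)$ has an isolated vertex $x$ and $G$ is non-cyclic, I must force $G\cong C_2\times C_2$ or $G\cong D_{2p}$.

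First I would reduce to the case $\Phi(G)=1$. Recall that $g$ is a non-isolated vertex of $\Gamma(G)$ if and only if $g\Phi(G)$ is non-isolated in $\Gamma(G/\Phi(G))$, and that $G$ is non-cyclic if and only if $G/\Phi(G)$ is. Hence if $\Phi(G)\neq 1$, any $x'\in x\Phi(G)$ with $x'\neq x$ is again a vertex of $\Delta(G)$, while $\langle x,x'\rangle\Phi(G)=\langle x\rangle\Phi(G)\neq G$ shows $\langle x,x'\rangle\neq G$; thus $x$ and $x'$ are adjacent in $\Delta(G)$, contradicting isolatedness. Therefore either $G$ is cyclic (and we are done) or $\Phi(G)=1$. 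The second key observation, valid in every case, is that $x$ must be an involution: the inverse $x^{-1}$ lies in exactly the same generating pairs as $x$, so it is a vertex of $\Delta(G)$, and if $x^{-1}\neq x$ then $\langle x,x^{-1}\rangle=\langle x\rangle\neq G$ would make $x$ and $x^{-1}$ adjacent. Since the identity is isolated in $\Gamma(G)$, we conclude $x^2=1$ with $x\neq 1$.

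Next I would split according to whether $x$ is central. If $x$ is not central, pick $g$ with $x^g\neq x$; as conjugation is an automorphism, $x^g$ is again a vertex of $\Delta(G)$, so isolatedness gives $\langle x,x^g\rangle=G$. Being generated by two involutions, $G$ is then dihedral, say $G\cong D_{2n}$ with $x$ a reflection. For every other reflection $s'$ one checks that $s'$ is a vertex of $\Delta(G)$, that the product $xs'$ is a rotation, and that $\langle x,s'\rangle=G$ holds if and only if this rotation generates the full rotation subgroup; requiring this simultaneously for all reflections $s'$ forces every nonzero residue modulo $n$ to be coprime to $n$, i.e.\ $n$ to be prime. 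This yields $G\cong C_2\times C_2$ when $n=2$ and $G\cong D_{2p}$ with $p$ an odd prime otherwise. If instead $x$ is central, then for a partner $y$ with $\langle x,y\rangle=G$ the group $G=\langle x\rangle\langle y\rangle$ is abelian; together with $\Phi(G)=1$ this makes every Sylow subgroup elementary abelian. Using that the central involution $x$ must itself lie in a generating pair and must generate $G$ with every other involution that is a vertex, one excludes any odd-order direct factor and $2$-rank exceeding $2$, and rules out $2$-rank $2$ accompanied by a nontrivial odd part (which would supply a second involution adjacent to $x$); hence $G\cong C_2\times C_2$.

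The main obstacle is the final pinning-down inside each branch rather than the overall strategy. In the dihedral branch one must determine precisely which reflections and rotations belong to the vertex set $V$ and translate the condition $\langle x,s'\rangle=G$ into the arithmetic statement $\gcd(a,n)=1$ forcing $n$ prime, while correctly treating the degenerate value $n=2$. In the abelian branch one must verify, for each candidate $G\cong C_2\times C_2\times K$, that either $x$ fails to lie in any generating pair (so $x\notin V$) or a second isolated involution becomes adjacent to $x$, a contradiction unless $K=1$; this case analysis is elementary but needs care to cover groups with a unique involution. Once both branches are settled, combining them with the cyclic case and the already-established ``if'' direction gives the stated trichotomy.
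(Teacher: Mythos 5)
Your proposal is correct, but there is nothing in the paper to compare it against: the paper does not prove Proposition \ref{isolated} at all, it simply quotes it from \cite[Proposition 2]{Lucc.Nem.}. So your argument has to stand on its own, and it does. The ``if'' direction is indeed already available from Lemmas \ref{ng.cyclic}, \ref{ng.p.group} and \ref{ng.dihedral}. For the converse, each of your steps checks out: the Frattini reduction works because $x'\in x\Phi(G)$, $x'\neq x$, is again a vertex and $\langle x,x'\rangle\leq\langle x\rangle\Phi(G)<G$ when $G$ is non-cyclic; the involution step works because $\langle x^{-1},y\rangle=\langle x,y\rangle$ makes $x^{-1}$ a vertex, forcing $x=x^{-1}$; in the non-central case, conjugation is a graph automorphism of $\Delta(G)$, so $\langle x,x^g\rangle=G$ makes $G$ dihedral, every reflection of $D_{2n}$ is a vertex (pair it with an adjacent reflection or with $r$), and $\langle s,r^as\rangle=G$ for all $a\neq 0$ is exactly primality of $n$; in the central case, $G$ is abelian with elementary abelian Sylow subgroups, and the analysis of $G=S_2\times K$ (with $S_2\in\{C_2,C_2\times C_2\}$, $K$ odd) goes through as you sketch: if $S_2=C_2$ then $x$ is a vertex only when $K$ is cyclic, which makes $G$ cyclic; if $S_2=C_2\times C_2$ and $K\neq 1$, either $K$ is non-cyclic and $x$ is not a vertex, or $K$ is cyclic and a second involution $(a',0)$ is a vertex adjacent to $x$. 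Two cosmetic points: in the non-central branch $n=2$ cannot actually occur (the Klein group is abelian, so that case belongs entirely to the central branch), and ``$2$-rank exceeding $2$'' is excluded for free by $2$-generation, so neither needs separate treatment. What your route buys, compared with the paper's bare citation, is a short self-contained proof using only elementary structure theory (Frattini quotients, dihedral groups generated by two involutions, abelian groups with trivial Frattini subgroup), independent of the machinery Lucchini and Nemmi develop for the full connectivity classification in Proposition \ref{disconnected}.
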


As a consequence of Proposition \ref{isolated} we obtain the following:

\begin{cor}\label{cor.ng.isolated}
 Let $G$ and $H$ be finite $2$-generated groups with $\Delta(G) \cong \Delta(H)$. If $\Delta(G)$ contains isolated vertices and $G$ is nilpotent, then $H$ is nilpotent.   
\end{cor}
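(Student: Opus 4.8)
The plan is to combine the classification in Proposition \ref{isolated} with the explicit descriptions of $\Delta(G)$ computed in Lemmas \ref{ng.cyclic}, \ref{ng.p.group} and \ref{ng.dihedral}. The presence of an isolated vertex is a graph-theoretic invariant, so $\Delta(G)\cong\Delta(H)$ together with the hypothesis on $\Delta(G)$ forces $\Delta(H)$ to have an isolated vertex as well. Applying Proposition \ref{isolated} to $H$ then shows that $H$ is cyclic, or $H\cong C_2\times C_2$, or $H\cong D_{2p}$ for some odd prime $p$. The first two possibilities already yield a nilpotent group, so the entire statement reduces to ruling out the case $H\cong D_{2p}$.

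To exclude it, I would argue by contradiction: suppose $H\cong D_{2p}$. Since $G$ is nilpotent while $D_{2p}$ is not, applying Proposition \ref{isolated} to $G$ leaves only the nilpotent options, namely $G$ cyclic or $G\cong C_2\times C_2$. I then compare $\Delta(G)$ with $\Delta(D_{2p})$, whose shape is pinned down by Lemma \ref{ng.dihedral}: it has $2p-1$ vertices, exactly $p$ of them isolated, and a single complete component of size $p-1\geq 2$; in particular it contains at least one edge. If $G\cong C_2\times C_2$, then Lemma \ref{ng.p.group} gives only $3$ vertices, contradicting $2p-1\geq 5$. If $G\cong C_n$ is cyclic with $n$ prime, then $\Delta(G)$ turns out to be edgeless (every nonidentity element is a generator and hence isolated, and the identity has no nongenerator to be joined to), which is incompatible with $\Delta(D_{2p})$ having an edge.

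The remaining, and genuinely delicate, case is $G\cong C_n$ with $n$ composite, and this is where the main work lies. Here I would first observe that the isolated vertices of $\Delta(C_n)$ are exactly the generators of $C_n$: a generator is joined to no one since $\langle x,y\rangle\supseteq\langle x\rangle=C_n$, while any nongenerator is joined in $\Delta$ to the identity. Hence their number equals $\varphi(n)$, and matching the number of isolated vertices across the isomorphism would force $\varphi(n)=p$; but for $n\geq 3$ the value $\varphi(n)$ is even, whereas $p$ is an odd prime, a contradiction. The main obstacle is precisely this composite cyclic case: unlike the other candidates it does produce a graph with both isolated vertices and a nontrivial complete part, so it cannot be dismissed by a crude edge-or-vertex count, and one must isolate a finer numerical invariant, namely the parity of the number of isolated vertices, to separate $\Delta(C_n)$ from $\Delta(D_{2p})$. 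Having excluded $H\cong D_{2p}$, we conclude that $H$ is nilpotent.
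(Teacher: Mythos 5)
Your proof is correct, and it follows the paper's overall skeleton: use Proposition \ref{isolated} on both groups to reduce to excluding $H \cong D_{2p}$, dispose of $G \cong C_2 \times C_2$ by counting vertices, and dispose of cyclic $G$ of prime order by noting $\Delta(G)$ would be edgeless. Where you genuinely diverge is the decisive case of $G \cong C_n$ with $n$ composite. The paper splits this into two subcases: if $n$ has two distinct prime divisors $q_1 \neq q_2$, then $1$, $x^{q_1}$, $x^{q_2}$ lie in a common connected component that is not complete (since $\langle x^{q_1}, x^{q_2}\rangle = G$), contradicting the shape of $\Delta(D_{2p})$ (one complete component plus isolated vertices); if instead $n = q^k$ with $k \geq 2$, the paper matches the number of isolated vertices, $q^k - q^{k-1} = p$, and derives $q = p$, which is absurd. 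You collapse both subcases into a single parity argument: the isolated vertices of $\Delta(C_n)$ are exactly the $\varphi(n)$ generators (for composite $n$ the identity is joined to some nontrivial non-generator, so it is not isolated), and $\varphi(n)$ is even for $n \geq 3$ while $p$ is odd, so $\varphi(n) = p$ is impossible. Your route is shorter and uses only the coarsest invariant (the count of isolated vertices), whereas the paper's argument exploits the finer component structure of $\Delta(D_{2p})$; the trade-off is that your argument leans on the number-theoretic fact that $\varphi(n)$ is even for $n \geq 3$, which, while standard, you should state and justify explicitly (odd prime factors of $n$ contribute an even factor $q-1$ to $\varphi(n)$, and $n = 2^k$ with $k \geq 2$ gives $\varphi(n) = 2^{k-1}$).
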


\begin{proof}
 By Proposition \ref{isolated}, the group $G$ is either cyclic or isomorphic to $C_2 \times C_2$ and it is enough to prove that $H$ cannot be isomorphic to the group $D_{2p}$. Aiming for a contradiction, suppose $H \cong D_{2p}$. If $G \cong C_2 \times C_2$, then $\Delta(H)$ should have $3$ vertices, and by Lemma \ref{ng.dihedral} we should have $2p -1 = 3$, that is impossible. Thus $G$ must be cyclic, say $G = \langle x \rangle$, and by Lemma \ref{ng.cyclic} we deduce that $|G| = 2p-1 \geq 5$. Also, by Lemma \ref{ng.dihedral} the graph $\Delta(G)$ has a complete connected component of size $p-1$ and $p$ isolated vertices. In particular $|G|$ is not prime (otherwise all vertices would be isolated). Suppose $q_1\neq q_2$ are prime numbers dividing $|G|$. Then both $x^{q_1}$ and $x^{q_2}$ are joined to the trivial element, but they are not joined among each other (as they generate $G$ together). However by Lemma \ref{ng.dihedral} $\Delta(D_{2p})$ cannot have this shape. Thus $G$ must be a cyclic $q$-group for some prime $q$, of order $q^k = 2p-1$ for $k\geq 2$. In particular the number of isolated vertices is $q^k -q^{k-1}$ and this must equal $p$.  Hence $q$ divides $p$, implying $q=p$ a contradiction.
 
 This proves that $H$ is either cyclic or isomorphic to  $C_2 \times C_2$, and so in particular it is nilpotent.      
\end{proof}

More generally, Lucchini and Nemmi described the finite $2$-generated groups $G$ in which $\Delta(G)$ is disconnected:

\begin{prop}\cite[Theorem 1]{Lucc.Nem.}\label{disconnected}
Let G be a $2$-generated finite group. Then $\Delta(G)$ is disconnected if and only if one of the following holds:
\begin{enumerate}
\item $G$ is cyclic;
\item $G$ is a $p$-group;
\item $G$ is not a $p$-group, $G/ \Phi(G) \cong (V_1 \times \dots \times V_t) \rtimes Y$, where $Y \cong C_p$ for some prime $p$, and $V_1, \dots, V_t$ are pairwise non-$Y$-isomorphic non-trivial irreducible $Y$-modules; or
\item $G$ is not a $p$-group, $G/ \Phi(G) \cong (V_1 \times \dots \times V_t) \rtimes Y$, where $Y \cong C_p \times C_p$ for some prime $p$, $V_1, \dots, V_t$ are pairwise non-$Y$-isomorphic non-trivial irreducible $Y$-modules and we have $C_Y(V_1 \times \dots \times V_t) \cong C_p$.
\end{enumerate}
\end{prop}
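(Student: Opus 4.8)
The plan is to prove both implications after first reducing to the Frattini-free case. Since $\langle x,y\rangle=G$ if and only if $\langle x\Phi(G),y\Phi(G)\rangle=G/\Phi(G)$, and since distinct elements in a common $\Phi(G)$-coset never generate the non-cyclic group $G$, the graph $\Delta(G)$ is obtained from $\Delta(G/\Phi(G))$ by replacing each vertex with a clique of size $|\Phi(G)|$; such a blow-up preserves connectedness. As being cyclic and being a $p$-group are both detected on $G/\Phi(G)$ (for the latter, $G/\Phi(G)$ a $p$-group forces $G=P\Phi(G)=P$ by the non-generator property), I may and will assume $\Phi(G)=1$ throughout.

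For the ``if'' direction, cases (1) and (2) are immediate from Lemma \ref{ng.cyclic} (isolated vertices) and Lemma \ref{ng.p.group} ($p+1$ components). For (3), write $G=(V_1\times\cdots\times V_t)\rtimes Y$ with $Y\cong C_p$; since each $V_i$ is a nontrivial irreducible $C_p$-module, $Y$ acts faithfully and $C_V(Y)=0$, so every element outside $V$ has order $p$ and generates a complement. A short computation with the generation criterion for $V\rtimes Y$ (a pair generates $G$ if and only if it surjects onto $Y$ and is not contained in a complement of any $V_i$) shows that an element $a\in V$ is a vertex of $\Delta(G)$ exactly when all its coordinates are nonzero, and that any such $a$ generates $G$ together with every element lying outside $V$. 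Hence $\{a\in V:\ a_i\neq 0\ \text{for all}\ i\}$ is a complete connected component with no edges to the remaining vertices, and $\Delta(G)$ is disconnected. Case (4) reduces to (3): here $C_Y(V)\cong C_p$ is central (as $Y$ is abelian), so $\Phi(G)=1$ lets me split it off as a direct factor, $G\cong C_p\times(V\rtimes C_p)$, and a parallel check produces the required separation.

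The substance of the proof is the ``only if'' direction, which I would organize around the elementary reformulation that the complement of a graph is disconnected precisely when the graph is a join: $\Delta(G)$ is disconnected if and only if the non-isolated part of the generating graph splits as a join $\Gamma_1 \ast \Gamma_2$, that is, the vertex set partitions as $A\sqcup B$ with $\langle a,b\rangle=G$ for all $a\in A$, $b\in B$. The first key observation is that the spread elements contained in any single maximal subgroup all lie in the same part (two of them generate a proper subgroup, hence are non-adjacent in the generating graph, hence lie on the same side of the join); this two-colours the maximal subgroups of $G$. I would then show that $G$ must be soluble, invoking the known high connectivity of generating graphs of groups possessing a non-abelian chief factor, which makes $\Delta(G)$ connected, and pass to the crown decomposition of the soluble, Frattini-free group $G=\mathrm{soc}(G)\rtimes T$. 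Since generation is controlled crown by crown, the join condition forces all generation obstructions to concentrate in a single abelian ``controlling'' section $Y$; analysing its admissible structure yields $Y\cong C_p$ or $Y\cong C_p\times C_p$, while the pairwise non-isomorphism of the modules $V_i$ is forced because two $Y$-isomorphic chief factors would furnish generating pairs linking $A$ to $B$.

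I expect the main obstacle to be exactly this last structural analysis: showing that a larger ``top'' (rank of $Y$ at least three, a second independent controlling crown, or an isomorphism among the $V_i$) always produces a path in $\Delta(G)$ joining the two putative components. Making this quantitative requires tight control of which pairs generate $G$ modulo each crown, and it is here that the precise numerology of complements, reminiscent of the degree computations in Lemmas \ref{supuno} and \ref{supdue}, has to be pushed through, together with the extra condition $C_Y(V)\cong C_p$ that separates case (4) from case (3).
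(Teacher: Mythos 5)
First, a point of reference: the paper does not prove Proposition~\ref{disconnected} at all; it is quoted verbatim from \cite[Theorem 1]{Lucc.Nem.}. So your proposal must stand on its own, and its first half largely does. The Frattini reduction is correct: vertices of $\Delta(G)$ are exactly the elements mapping to vertices of $\Delta(G/\Phi(G))$, distinct elements of a common coset are adjacent when $G$ is non-cyclic, and the resulting clique blow-up preserves the number of connected components. The ``if'' direction is also essentially right and completable: cases (1)--(2) follow from Lemmas~\ref{ng.cyclic} and~\ref{ng.p.group}; in case (3) the maximal subgroups of $(V_1\times\dots\times V_t)\rtimes Y$ are $V=V_1\times\dots\times V_t$ and the conjugates of the complements $(\prod_{j\neq k}V_j)\rtimes Y$, from which your description of the vertices inside $V$ and the resulting complete component follows; and case (4) does reduce to (3), since Schur's lemma forces each $Y/C_Y(V_i)$ to be cyclic of order $p$, hence $C_Y(V_i)=C_Y(V)$ for every $i$ and $G/\Phi(G)\cong C_p\times((V_1\times\dots\times V_t)\rtimes C_p)$ --- although the ``parallel check'' you defer there is itself nontrivial (it is the content of Lemmas 24 and 25 of \cite{Lucc.Nem.}, which this paper invokes in proving Theorem~\ref{q1.ng.disconnected}).

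The genuine gap is the ``only if'' direction, which is the substance of the theorem and which your proposal outlines rather than proves. Two steps are asserted without argument. (i) Solubility of $G$ is obtained by ``invoking the known high connectivity of generating graphs of groups possessing a non-abelian chief factor,'' with no precise statement or citation; for insoluble $G$ the connectedness of $\Delta(G)$ is a hard theorem resting on classification-based spread and generation results, and it constitutes a substantial portion of \cite[Theorem 1]{Lucc.Nem.} itself, so it cannot simply be absorbed as folklore. (ii) In the soluble Frattini-free case, the claim that the join condition ``forces all generation obstructions to concentrate in a single abelian controlling section $Y$,'' with $Y\cong C_p$ or $C_p\times C_p$, with $C_Y(V)\cong C_p$ in the latter case, and with pairwise non-$Y$-isomorphic modules $V_i$, \emph{is} the statement to be proved: no mechanism is exhibited that produces a path in $\Delta(G)$ between the two putative sides of the join when, say, $Y$ has rank at least $3$, or $Y$ is non-abelian, or two chief factors in the socle are $Y$-isomorphic, or $Y\cong C_p\times C_p$ acts faithfully on the socle. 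You concede this yourself in the closing paragraph (``I expect the main obstacle to be exactly this last structural analysis''). Until those exclusions are carried out with actual generation arguments, what you have is a correct reduction plus an accurate road map, with the core of the classification missing.
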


We are now ready to prove that Question \ref{q1} has a positive answer when $\Delta(G)$ is disconnected:

\begin{thm}\label{q1.ng.disconnected}
Let $G$ and $H$ be finite $2$-generated groups with $\Delta(G) \cong \Delta(H)$. If $\Delta(G)$ is disconnected and $G$ is nilpotent, then $H$ is nilpotent.
\end{thm}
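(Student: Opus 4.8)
The plan is to use Proposition \ref{disconnected} to pin down the isomorphism type of $G$, to dispose of the easy situations with Corollary \ref{cor.ng.isolated}, and then to reduce the whole theorem to a single structural claim: a group of type (3) or (4) in Proposition \ref{disconnected} cannot have a graph $\Delta$ isomorphic to that of a noncyclic $p$-group.

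First I would apply Proposition \ref{disconnected} to $G$. Since $G$ is nilpotent we have $G'\leq\Phi(G)$, so $G/\Phi(G)$ is abelian; in cases (3) and (4) the quotient $G/\Phi(G)\cong(V_1\times\cdots\times V_t)\rtimes Y$ is nonabelian, because the modules $V_i$ are nontrivial and $Y$ acts nontrivially on them. Hence those two cases cannot occur, and $G$ is either cyclic or a $p$-group. If $G$ is cyclic, or $G\cong C_2\times C_2$, then $\Delta(G)$ has an isolated vertex by Lemma \ref{ng.cyclic} and Lemma \ref{ng.p.group} respectively, and the conclusion follows immediately from Corollary \ref{cor.ng.isolated}. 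So from now on I may assume that $G$ is a noncyclic $p$-group with $G\not\cong C_2\times C_2$.

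In this remaining case Lemma \ref{ng.p.group} says that $\Delta(G)$ is the disjoint union of exactly $p+1\geq 3$ complete graphs, each on $m=p^{\,n-2}(p-1)\geq 2$ vertices; in particular $\Delta(G)$ has no isolated vertex. Since $\Delta(H)\cong\Delta(G)$, the graph $\Delta(H)$ has exactly the same shape: it is a disjoint union of at least three cliques, all of the same size $m\geq 2$, disconnected and without isolated vertices. Applying Proposition \ref{disconnected} to $H$, the group $H$ is cyclic, a $q$-group, or of type (3) or (4). The cyclic possibility is ruled out by Lemma \ref{ng.cyclic}, which would force an isolated vertex in $\Delta(H)$; so if I can also exclude types (3) and (4), then $H$ is a $q$-group and hence nilpotent, completing the proof.

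The heart of the matter, and the step I expect to be the main obstacle, is excluding types (3) and (4). Here I would pass to $H/\Phi(H)\cong(V_1\times\cdots\times V_t)\rtimes Y$ and reformulate the clique structure of $\Delta(H)$ as the assertion that the relation ``$\langle x,y\rangle\neq H$'' is an equivalence relation on the non-isolated vertices with all classes of equal size. I would then force a contradiction by comparing the clique formed by the non-isolated elements lying in $N=V_1\times\cdots\times V_t$ (these are pairwise non-generating, since together they lie in $N\neq H$) with a clique arising from a complement of $N$. A direct computation with the action of $Y$ on the $V_i$---showing, for instance, that an element of $N$ together with a ``full-support'' element outside $N$ always generates $H$---identifies these cliques and shows that either their sizes differ, or the non-generating relation is not transitive and $\Delta(H)$ contains an induced path on three vertices. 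Either way $\Delta(H)$ fails to be a disjoint union of equally sized cliques, contradicting $\Delta(H)\cong\Delta(G)$. The delicate point is that both failure modes genuinely occur---for example $H\cong A_4$ gives cliques of unequal sizes, while $H\cong D_{30}$ produces an induced $P_3$---so the argument must be organised carefully around the structural parameters $t$, $Y$ and the $V_i$ supplied by Proposition \ref{disconnected}.
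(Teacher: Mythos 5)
Your reduction is sound and is in fact the same as the paper's: Corollary \ref{cor.ng.isolated} and Lemma \ref{ng.cyclic} dispose of the cases with isolated vertices, nilpotency of $G$ (via $G'\leq\Phi(G)$) rules out types (3) and (4) for $G$ in Proposition \ref{disconnected}, Lemma \ref{ng.p.group} gives the shape of $\Delta(G)$, and everything reduces to showing that a group $H$ of type (3) or (4) cannot have $\Delta(H)\cong\Delta(G)$. But that exclusion is the entire content of the theorem, and you have not proved it: you assert that ``a direct computation'' shows that either the clique sizes differ or the non-generating relation fails to be transitive, and you yourself concede that the argument ``must be organised carefully'' --- that organisation is precisely what is missing. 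Moreover, the plan as stated breaks down in two concrete ways. First, the auxiliary claim that an element of $N=V_1\times\cdots\times V_t$ together with a full-support element outside $N$ always generates $H$ is false: in $D_{30}$ an order-$3$ rotation (an element of $N=C_{15}$) together with any reflection generates only a dihedral group of order $6$; one needs the $N$-element to have full support as well. Second, and more seriously, in type (4), where $Y\cong C_q\times C_q$ and $C_Y(N)\cong C_q$, \emph{no} element of $N$ lies in any generating pair (its image in $Y$ is trivial and $Y$ is not cyclic), so ``the clique formed by the non-isolated elements lying in $N$'' is empty and your comparison has nothing to compare; the correct subgroup to work with is the maximal subgroup $M$ with $M/\Phi(H)\cong N\times C_Y(N)$ (and $M/\Phi(H)\cong N$ in type (3)).

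The paper avoids your strong unconditional dichotomy altogether and instead exploits regularity, which comes for free from $\Delta(H)\cong\Delta(G)$. Citing \cite[Lemmas 24 and 25]{Lucc.Nem.}, the set $\Omega$ of vertices of $\Delta(H)$ lying in $M$ is a proper connected component in which every vertex has degree $|\Omega|-1$, while a vertex $h$ with $H=M\langle h\rangle$ has degree $(|M|-|\Omega|)(q-1)-1$. Since $\Delta(G)$ is regular of valency $p^{n-2}(p-1)-1$ on $p^{n-2}(p^2-1)$ vertices, regularity of $\Delta(H)$ forces $|\Omega|=|M|\frac{q-1}{q}$; equating valencies gives $p^{n-2}(p-1)=|M|\frac{q-1}{q}$, and then comparing the numbers of vertices gives $p+1=q+1$, hence $p=q$, $|M|=p^{n-1}$ and $|H|=p^n$, contradicting that groups of type (3) and (4) are not $p$-groups. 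If you want to complete your argument, the realistic route is exactly this: prove (or cite) the two degree formulas for vertices inside and outside $M$, and let the isomorphism with the regular graph $\Delta(G)$ do the rest, rather than attempting to show that $\Delta(H)$ is never a disjoint union of equal cliques for any group of type (3) or (4).
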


\begin{proof}
Thanks to Corollary \ref{cor.ng.isolated}, we can assume that $\Delta(G)$ does not have isolated vertices, so in particular $G$ is not cyclic by Lemma \ref{ng.cyclic}.
By Proposition \ref{disconnected}, we can suppose that $G$ is a $p$-group of order $p^n$, for some prime $p$ and integer $n\geq 2$, and we have to show that $\Delta(G)$ is not isomorphic to $\Delta(X)$ whenever $X$ is a group of type (3) or (4).

By Lemma \ref{ng.p.group} $\Delta(G)$ contains $p^{n-2}(p^2-1)$ vertices and it is regular of valency $p^{n-2}(p-1) - 1$.

Aiming for a contradiction, suppose that $H$ is of type (3) or (4), with $K \cong C_q$  or $K \cong C_q \times C_q$ for some prime $q$. Note that in both cases $H$ contains a maximal subgroup $M$ that is normal in $H$ and such that $|H| = |M|q$. Indeed, if $H$ is of type (3) then $M/\Phi(H) \cong V_1 \times \dots \times V_t$, while if $H$ is of type (4) then $M/\Phi(H) \cong (V_1 \times \dots \times V_t) \times \langle k \rangle$ where $\langle k \rangle = C_K(V_1 \times \dots \times V_t)$. 
Let $\Omega$ denote the set of vertices of $\Delta(H)$ corresponding to elements of $M$. 
In \cite[Lemmas 24 and 25]{Lucc.Nem.} the authors studied the structure of $\Delta(H)$, proving that $\Omega$ is a proper connected component of $\Delta(H)$. In particular,
every vertex $x \in \Omega$ has degree $|\Omega| - 1$ (as $x$ is not joined to itself).
Also, if $h\in H$ is such that 
$H = M\langle h \rangle$, then every element of $H$ can be written as $mh^j$ for some $m\in M$ and $1\leq j \leq q$ and $\langle h, mh^j\rangle = \langle h , m \rangle < H$ if and only if $m \in M \backslash \Omega.$ Moreover, if $m \in M \backslash \Omega$, then $mh^j$ is a vertex if and only if $h^j\neq 1$ (that is equivalent to $j\neq q$). This implies that $h$ has degree $(|M| - |\Omega|)(q-1) - 1$. 

Since $\Delta(G)$ and $\Delta(H)$ are isomorphic, the graph $\Delta(H)$ must be regular. Hence
\[ |\Omega| - 1 = (|M| - |\Omega|)(q-1) - 1  \]
that gives

\[ |\Omega| = |M|\frac{q-1}{q}.  \]

Comparing the valencies of $\Delta(G)$ and $\Delta(H)$ we obtain
\begin{equation}\label{eq.val} p^{n-2}(p-1) = |\Omega| =  |M|\frac{q-1}{q}.\end{equation}

Also, the total number of vertices of $\Delta(H)$ is
\[|\Omega| + (|H| - |M|) = |\Omega| + |M|(q - 1) = |M|\frac{q-1}{q} + |M|(q - 1) = |M|(q-1)\left(\frac{1}{q} + 1\right). \]

Dividing the number of vertices of $\Delta(G)$ and $\Delta(H)$ by the values obtained in equation \ref{eq.val} we get
\[ \frac{p^{n-2}(p^2 -1)}{p^{n-2}(p-1)} = \frac{|M|(q-1)\left(\frac{1}{q} + 1\right) }{|M|\frac{q-1}{q}}.\]

Simplifying the above equation, we conclude that $p=q$. Finally, again by equation \ref{eq.val} we obtain $|M| = p^{n-1}$ and so $|H|= p|M| = p^n$ and $H$ is a $p$-group, a contradiction. This proves the statement.
\end{proof}

\section{The prime graph (or Gruenberg-Kegel graph)}

The prime graph (also known as Gruenberg-Kegel graph) was introduced by Gruenberg and Kegel in an unpublished paper in 1975.
For a finite group $K$, denote by $\pi(K)$ the set of prime divisors of the order of $K$. The prime graph of a finite group $G$ is the graph having $\pi(G)$ as vertex set and such that two distinct vertices $p$ and $q$ are adjacent if and only if $G$ contains an element of order $pq$. Note that if $p$ is a prime and $G$ is a finite $p$-group, then the prime graph of $G$ contains only one vertex and no edges. In particular Question \ref{q1} is trivially true for $p$-groups. However, it is not hard to see that it is false in general:

\begin{prop}
Let $G$ be a finite nilpotent group and suppose there exists a finite non-nilpotent group $K$ with $\pi(G)=\pi(K)$. Then $H:=K \times K$ is not nilpotent and the prime graphs of $G$ and $H$ are isomorphic.    
\end{prop}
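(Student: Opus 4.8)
The plan is to show that the prime graphs of both $G$ and $H$ are complete graphs on one and the same vertex set, from which the desired isomorphism is immediate; the non-nilpotency of $H$ comes essentially for free. First I would dispose of the non-nilpotency claim: since $K\cong K\times\{1\}$ is (isomorphic to) a subgroup of $H=K\times K$ and subgroups of finite nilpotent groups are nilpotent, if $H$ were nilpotent then $K$ would be nilpotent, contrary to hypothesis. Hence $H$ is not nilpotent. Next I would record that the vertex sets of the two prime graphs coincide, namely $\pi(H)=\pi(K\times K)=\pi(K)=\pi(G)$, where the last equality is exactly the hypothesis. So it suffices to prove that the prime graph of $G$ and the prime graph of $H$ are both complete on this common vertex set.

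For $G$ I would argue as follows. Since $G$ is nilpotent, it is the direct product of its Sylow subgroups. Given distinct primes $p,q\in\pi(G)$, Cauchy's theorem produces an element $x$ of order $p$ in the Sylow $p$-subgroup and an element $y$ of order $q$ in the Sylow $q$-subgroup. These lie in different direct factors, hence commute, so $xy$ has order $pq$. Thus every pair of distinct vertices is adjacent, and the prime graph of $G$ is complete. (This is precisely the observation, already implicit in the excerpt, that nilpotency forces a complete prime graph.)

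For $H$ comes the key point, which is exactly the reason the construction doubles $K$. Given distinct primes $p,q\in\pi(K)$, Cauchy's theorem gives an element $a\in K$ of order $p$ and an element $b\in K$ of order $q$. The element $(a,b)\in K\times K$ then has order $\mathrm{lcm}(p,q)=pq$. Hence every pair of distinct vertices of $\pi(H)$ is adjacent and the prime graph of $H$ is complete as well. Since both graphs are complete on the same finite vertex set $\pi(G)=\pi(H)$, they are isomorphic (indeed equal), which completes the proof. There is essentially no obstacle: the only idea needed is that passing from $K$ to $K\times K$ makes the prime graph complete—by placing a $p$-element and a $q$-element in separate factors—thereby matching the automatic completeness of a nilpotent group's prime graph, while the presence of the non-nilpotent factor $K$ preserves non-nilpotency. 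Note in particular that $K$ alone need not have a complete prime graph, so the doubling is what makes the argument work.
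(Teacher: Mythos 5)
Your proof is correct and follows essentially the same route as the paper's (much terser) argument: $H$ is non-nilpotent because its direct factor $K$ is, both prime graphs are complete on the common vertex set $\pi(G)=\pi(K)=\pi(H)$, and completeness of the prime graph of $H$ comes from pairing a $p$-element and a $q$-element in the two separate factors of $K\times K$. You have merely filled in the details (Cauchy's theorem, the $\mathrm{lcm}$ computation for orders in a direct product) that the paper leaves implicit.
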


\begin{proof}
Note that $H$ is not nilpotent, because $K$ is not nilpotent by assumptions, $\pi(H)=\pi(G)$ and the prime graph of $H$ is complete, and so isomorphic to the one of $G$.     
\end{proof}

We can even find finite groups with the same order representing a negative answer to Question \ref{q1}. Take for example $G = C_6 \times C_6$ and $H= S_3 \times C_6$, both having a complete prime graph on two vertices. However, Question \ref{q1} has a positive answer if we assume that $H$ has square-free order:

\begin{prop}\label{prime.squarefree}
Let $G$ and $H$ be finite groups with isomorphic prime graphs. If $G$ is nilpotent and $|H|$ is square-free, then $H$ is cyclic (hence nilpotent). 
\end{prop}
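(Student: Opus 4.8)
The plan is to reduce the statement to a purely structural fact about groups of square-free order. First, since $G$ is nilpotent it is the direct product of its Sylow subgroups, so for any two primes $p,q\in\pi(G)$ the product of a commuting $p$-element and $q$-element has order $pq$; hence the prime graph of $G$ is the complete graph on $|\pi(G)|$ vertices. A graph isomorphism $\Gamma_G\cong\Gamma_H$ then forces $\Gamma_H$ to be complete on the same number of vertices, i.e.\ $|\pi(H)|=|\pi(G)|$ and $H$ has an element of order $pq$ for every pair of distinct primes $p,q$ dividing $|H|$. It therefore suffices to prove that a finite group $H$ of square-free order whose prime graph is complete must be cyclic.

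For the structural input I would invoke the classical description of groups all of whose Sylow subgroups are cyclic, which applies here because $|H|$ square-free forces every Sylow subgroup to have prime order. Such an $H$ is metacyclic: $H=\langle a\rangle\rtimes\langle b\rangle$ with $\langle a\rangle\cong C_m$ a normal cyclic Hall subgroup, $\langle b\rangle\cong C_n$ cyclic, $\gcd(m,n)=1$ and $mn=|H|$; moreover $H$ is cyclic precisely when $b$ centralizes $\langle a\rangle$. Since $m$ is square-free, $\langle a\rangle=\prod_{p\mid m}C_p$, and for each $p\mid m$ the factor $C_p$ is the unique Sylow $p$-subgroup of $H$, being characteristic in the normal Hall subgroup $\langle a\rangle$; in particular every element of $H$ of order $p$ lies in $C_p$.

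Now suppose, for a contradiction, that $H$ is not cyclic. Then $b$ acts nontrivially on some factor $C_p$, so the image of $\langle b\rangle$ in $\aut(C_p)\cong C_{p-1}$ is nontrivial cyclic of order $d>1$ with $d\mid\gcd(n,p-1)$. Picking a prime $q\mid d$, we get $q\mid n$, and the Sylow $q$-subgroup $\langle b^{n/q}\rangle$ of $\langle b\rangle$ maps onto the nontrivial Sylow $q$-part of that image, so $b^{n/q}$ induces a nontrivial, hence fixed-point-free, automorphism of $C_p$. Because $C_p$ is normal in $H$, every conjugate of $\langle b^{n/q}\rangle$ acts in the same fixed-point-free manner; as every element of order $q$ generates a Sylow $q$-subgroup, no element of order $q$ commutes with any nontrivial element of $C_p$. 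Since all $p$-elements lie in $C_p$, the group $H$ has no element of order $pq$, so $p$ and $q$ are non-adjacent in $\Gamma_H$, contradicting completeness. Hence $H$ is cyclic.

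The main obstacle I anticipate lies in the middle step: extracting, from the mere non-cyclicity of $H$, a specific pair of primes $(p,q)$ witnessing a missing edge, and verifying that \emph{no} element of order $q$ whatsoever (not just the chosen generator $b^{n/q}$) can centralize a nontrivial element of $C_p$. This is exactly where the normality of the unique Sylow $p$-subgroup $C_p$ and the fixed-point-freeness of a nontrivial automorphism of a cyclic group of prime order are essential, and one must check that conjugate Sylow $q$-subgroups do not accidentally acquire fixed points in $C_p$.
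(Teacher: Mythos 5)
Your proof is correct, but it takes a genuinely different route from the paper's. The paper stays entirely inside elementary Sylow theory and argues directly: for each prime $p$ dividing $|H|$, completeness of the prime graph yields elements of order $pq$ and $pr$ for every other prime $r$, whose power decompositions exhibit, for each $r$, a Sylow $r$-subgroup commuting with a fixed Sylow $p$-subgroup $P$; hence $C_H(P)=H$, so every Sylow subgroup is central, and $H$ is a direct product of cyclic groups of coprime prime orders, hence cyclic. You instead reduce to a structural classification: square-free order forces all Sylow subgroups to be cyclic, so H\"older's theorem gives the metacyclic decomposition $H=\langle a\rangle\rtimes\langle b\rangle\cong C_m\rtimes C_n$ with $\gcd(m,n)=1$, and you then argue by contraposition, locating for a non-cyclic $H$ an explicit missing edge $\{p,q\}$, with $p\mid m$ a prime on which $b$ acts nontrivially and $q\mid n$ a prime surviving in the image of the action map $\langle b\rangle\to\aut(C_p)$. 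Your handling of the delicate points is sound: $C_p$ is indeed the unique Sylow $p$-subgroup (characteristic in the normal Hall subgroup $\langle a\rangle$), a nontrivial automorphism of $C_p$ is fixed-point-free on nontrivial elements, and the conjugation argument correctly extends fixed-point-freeness from $\langle b^{n/q}\rangle$ to every element of order $q$, since Sylow $q$-subgroups have order exactly $q$ and are all conjugate. The trade-off is that the paper's proof is self-contained, needing nothing beyond Sylow's theorems, whereas yours imports the classification of groups with all Sylow subgroups cyclic --- a classical but substantially deeper result, resting on Burnside's transfer theorem; in exchange, your argument makes the obstruction completely explicit, identifying precisely which pair of primes must be non-adjacent whenever $H$ fails to be cyclic.
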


\begin{proof}
Note that the assumption that $|H|$ is square-free implies that every Sylow subgroup of $H$ is cyclic of prime order. If $H$ is a $p$-group for a prime $p$, we are done.  Assume that there exist distinct primes $p$ and $q$ dividing the order of $H$. By hypothesis, the prime graph of $H$ is isomorphic to the one of $G$, that is complete.  Therefore there exists an element $x$ in $H$ of order $pq$ and $P=\langle x^q \rangle$ is a Sylow $p$-subgroup of $H$. We show that $P$ is contained in the center of $H$, by proving that for any prime $r$ dividing the order of $H$ there exists a Sylow $r$-subgroup of $H$ commuting with $P$. If $r=q$, then $P$ commutes with $Q=\langle x^p \rangle \in {\rm{Syl}}_q(H)$. Hence assume that $r$ is distinct from $p$ and $q$. Then there exists an element $y \in H$ of order $pr$ and the Sylow $p$-subgroup $P_1=\langle y^r\rangle$ commutes with the Sylow $r$-subgroup $R= \langle y^p\rangle $. Since $P$ and $P_1$ are conjugate, it follows that $P$ commutes with a conjugate of $R$.
The arbitrary choice of $p$, shows that $H$ is isomorphic to the direct product of cyclic groups of coprime orders, hence it is cyclic.
\end{proof}

\section{The join graph}
We conclude our survey with a graph that focuses the attention on the subgroup lattice of a finite group. The join graph of a finite group $G$ has been introduced by Lucchini in \cite{Lucc} as follows: it is the graph having as vertex-set the set of proper subgroups of $G$ and in which two subgroups $M$ and $N$ are joined if and only if $G = \langle M, N \rangle$. In general Question \ref{q1} has a negative answer for the join graph: consider for example the groups $C_p \times C_p$ and $D_{2p}$, for $p$ odd. However the following holds:

\begin{thm}\cite[Corollary 5]{Lucc}\label{join.H.super}
    Let $G$ and $H$ be finite groups with isomorphic join graphs. If $G$ is nilpotent, then $H$ is supersoluble.
\end{thm}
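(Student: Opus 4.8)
The plan is to reduce everything to indices of maximal subgroups. Recall Huppert's criterion: a finite group is supersoluble if and only if each of its maximal subgroups has prime index. Since $G$ is nilpotent, each of its maximal subgroups is normal of prime index, so $G$ satisfies this criterion trivially; the task is therefore to transport the information ``every maximal subgroup has prime index'' across the isomorphism $\Gamma_G\cong\Gamma_H$ (where $\Gamma_G$ denotes the join graph of $G$) and then to apply Huppert's criterion to $H$.

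The first step is to recognise maximal subgroups graph-theoretically. For a vertex $K$ write $N(K)$ for its open neighbourhood in $\Gamma_G$, that is the set of proper subgroups $L\neq K$ with $\langle K,L\rangle=G$. If $A\leq B$ then $\langle A,L\rangle=G$ implies $\langle B,L\rangle=G$, so $N(A)\subseteq N(B)$; more precisely one checks that $N(A)\subseteq N(B)$ holds if and only if every maximal subgroup containing $B$ also contains $A$. From this it follows that the inclusion-maximal members of $\{N(K)\}$ are exactly the neighbourhoods $N(K)$ with $K$ contained in a unique maximal subgroup, and that each such neighbourhood equals $N(M)$ for a unique maximal subgroup $M$ (distinct maximal subgroups have distinct neighbourhoods, since $N(M_1)=N(M_2)$ forces $M_1$ and $M_2$ to have the same subgroups). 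Thus the collection of inclusion-maximal neighbourhoods is in canonical bijection with the set of maximal subgroups, and this collection is preserved by any graph isomorphism. Consequently $\Gamma_G\cong\Gamma_H$ induces a bijection between the maximal subgroups of $G$ and those of $H$.

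The second step is to read indices off the graph. For any vertex $K$, the number of maximal subgroups containing $K$ equals the number of inclusion-maximal neighbourhoods $S$ with $K\notin S$, hence is a graph invariant. For the nilpotent group $G$ one has $G/\Phi(G)\cong\prod_p(\mathbb{F}_p)^{d_p}$, the subgroups above $\Phi(G)$ form a direct product of subspace lattices, and the maximal subgroups correspond to hyperplanes; in particular, if $K$ is the meet of two distinct maximal subgroups of the same prime index $p$ whose intersection has index $p^2$, then exactly $p+1$ maximal subgroups lie above $K$, so the prime $p$ appears in $\Gamma_G$ as one less than such a count. The proposal is to use these counts, together with the adjacency pattern among inclusion-maximal neighbourhoods and their common ``lower'' vertices, to constrain the corresponding configurations in $H$, forcing each index $[H:M_H]$ to be a single prime.

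The main obstacle is precisely this last transfer: one must show that the rigid hyperplane combinatorics coming from the nilpotency of $G$ is incompatible, in $H$, with the existence of a maximal subgroup of composite index, even though a priori the meets of maximal subgroups of $H$ need not assemble into subspace lattices. Once it is established that every maximal subgroup of $H$ has prime index, Huppert's criterion applied to $H$ gives that $H$ is supersoluble, as required.
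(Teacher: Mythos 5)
Your preliminary reductions are correct and worth recording: the equivalence ``$N(A)\subseteq N(B)$ if and only if every maximal subgroup containing $B$ contains $A$'', the identification of the inclusion-maximal neighbourhoods with the sets $N(M)=\{L : L\not\leq M\}$ for $M$ maximal, the resulting canonical bijection between the maximal subgroups of $G$ and those of $H$ induced by a join-graph isomorphism, and the observation that the number of maximal subgroups above a given vertex is a graph invariant all check out, as does the appeal to Huppert's criterion. (Note that the survey itself offers no proof to compare against: the theorem is quoted from [Lucc, Corollary 5], so your attempt must stand on its own.)

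It does not stand, however, because the decisive step --- that every maximal subgroup of $H$ has prime index --- is exactly what you label ``the main obstacle'' and never prove; by Huppert's criterion that step is \emph{equivalent} to the conclusion of the theorem, so what you have is a faithful reformulation plus some valid invariants, with the entire mathematical content still missing. Moreover, the transfer you envisage is genuinely delicate, because the induced bijection between maximal subgroups does not preserve indices. The paper's own example showing that Question~\ref{q1} fails for join graphs makes this concrete: for $G\cong C_p\times C_p$ and $H\cong D_{2p}$ ($p$ an odd prime) both join graphs are a complete graph on $p+1$ vertices plus an isolated vertex, and the $p+1$ maximal subgroups of $G$, all of index $p$, correspond to the $p+1$ maximal subgroups of $H$, of which $p$ have index $p$ and one has index $2$. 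So no local count can ``read off'' the index of a maximal subgroup of $H$; at best one can hope to exclude composite indices by a global argument. The invariants you list (such as the $p+1$ maximal subgroups lying above a codimension-two intersection in $G$) do not obviously achieve this, since in $H$ maximal subgroups need not be normal and the interval above a meet of two maximal subgroups need not be a projective geometry --- precisely the difficulty you concede. Until that step is supplied, the statement is not proved.
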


We can say more if the Frattini subgroup of $H$ is trivial.

\begin{lem}\label{lem:frat}
Let $G$ and $H$ be finite groups with isomorphic join graphs. If $\Phi(H) =1$ then $\Phi(G) =1$. In particular, if $\Phi(H) =1$ and $G$ is nilpotent then G is a direct product of elementary abelian groups.
\end{lem}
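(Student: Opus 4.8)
The plan is to recognize the condition $\Phi(G)=1$ as a purely graph-theoretic invariant of the join graph, namely the number of its isolated vertices. Since an isomorphism of join graphs certainly preserves the number of isolated vertices, this immediately yields the equivalence $\Phi(G)=1 \iff \Phi(H)=1$, of which the first assertion is one implication.

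The heart of the argument is the following claim: a proper subgroup $M$ of $G$ is an isolated vertex of the join graph of $G$ if and only if $M\leq \Phi(G)$. To prove it I would use that $\Phi(G)$ is the intersection of the maximal subgroups of $G$. If $M\leq \Phi(G)$ and $N$ is any proper subgroup, then $N$ lies in some maximal subgroup $Q$, and since $\Phi(G)\leq Q$ we get $\langle M,N\rangle \leq Q < G$; hence $M$ is joined to no vertex, i.e.\ it is isolated. Conversely, if $M\not\leq \Phi(G)$, then there is a maximal subgroup $Q$ with $M\not\leq Q$; then $Q$ is a proper subgroup distinct from $M$ and $\langle M,Q\rangle=G$ by maximality, so $M$ is joined to $Q$ and is not isolated.

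Granting the claim, the isolated vertices of the join graph of $G$ are exactly the subgroups of $G$ contained in $\Phi(G)$, that is, the subgroups of $\Phi(G)$. Their number equals $1$ precisely when $\Phi(G)=1$ (the trivial subgroup being the unique candidate), and is at least $2$ otherwise (as $\Phi(G)$ itself is then a second such subgroup). Applying the same count to $H$ and using the isomorphism of the join graphs, I conclude that $\Phi(G)=1$ if and only if $\Phi(H)=1$; in particular $\Phi(H)=1$ forces $\Phi(G)=1$.

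For the final assertion, suppose in addition that $G$ is nilpotent, so that $G$ is the direct product of its Sylow subgroups $P_1,\dots,P_k$ and $\Phi(G)=\Phi(P_1)\times\cdots\times\Phi(P_k)$. From $\Phi(G)=1$ we get $\Phi(P_i)=1$ for each $i$, and a $p$-group with trivial Frattini subgroup is elementary abelian; hence $G$ is a direct product of elementary abelian groups. The only genuine obstacle in this whole scheme is the isolated-vertex characterization; everything after it is bookkeeping with standard facts about $\Phi$ and nilpotent groups.
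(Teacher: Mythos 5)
Your proof is correct and takes essentially the same approach as the paper: both arguments identify the condition $\Phi=1$ with the join graph having a unique isolated vertex and then conclude via the fact that the Sylow subgroups of a nilpotent group with trivial Frattini subgroup are elementary abelian. The only difference is that you spell out the characterization of isolated vertices as exactly the subgroups contained in the Frattini subgroup, a fact the paper's proof uses implicitly without proof.
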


\begin{proof}
Since $\Phi(H)=1$, the identity subgroup is the unique isolated vertex in the join graph of $H$. Using the graph isomorphism, we deduce that the join graph of $G$ has a unique isolated vertex that must correspond to the trivial subgroup, and so $\Phi(G)$ is trivial too. Finally, if $G$ is nilpotent then for every prime divisor $p$ of $|G|$, if $P\in \rm{Syl}_p(G)$ then $\Phi(P) \leq \Phi(G)=1$ and so $P$ is elementary abelian. So all Sylow subgroups of $G$ are elementary abelian and $G$ is a direct product of elementary abelian groups.  
\end{proof}

Recall that a finite group $K$ is a $P$-group if it is either non-cyclic elementary abelian or a semidirect product of an elementary abelian $p$-group $A$ by a group of prime order $q \neq p$ which induces a non-trivial power automorphism on $A$.
Using Lemma \ref{lem:frat} and properties of the lattice of $G$, Lucchini obtained the following result:

\begin{thm}\cite[Proposition 6]{Lucc}
Let $G$ and $H$ be finite groups with isomorphic join graphs. If $G$ is nilpotent and $\Phi(H)=1$ then $H$ is a direct product of groups with pairwise coprime orders that are either $P$-groups or
elementary abelian groups.
\end{thm}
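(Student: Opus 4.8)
The plan is to combine the two preparatory results with the classical lattice-theoretic description of groups whose subgroup lattice is that of an elementary abelian group. First I would record the reductions. By Theorem \ref{join.H.super} the group $H$ is supersoluble, and by Lemma \ref{lem:frat} the hypothesis $\Phi(H)=1$ forces $\Phi(G)=1$, so that $G$, being nilpotent, decomposes as $G=E_1\times\cdots\times E_k$ where $E_i$ is elementary abelian of order $p_i^{n_i}$ and the $p_i$ are pairwise distinct primes; in particular $G$ is abelian of square-free exponent. The objective is to transfer the ``local shape'' of $G$ at each prime to $H$.

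Next I would exploit the structure of $H$. Being supersoluble with $\Phi(H)=1$, every maximal subgroup of $H$ has prime index and the intersection of all maximal subgroups is trivial. Using the Sylow tower of a supersoluble group, the Sylow subgroup $P$ for the largest prime is normal, and since $\Phi(P)\le\Phi(H)=1$ (a standard fact for normal Sylow subgroups) it is elementary abelian; I would then argue, combining an induction on $|H|$ with the join graph comparison of the next step, that \emph{every} Sylow subgroup of $H$ is elementary abelian. Some care is needed here, because triviality of the Frattini subgroup is not automatically inherited by quotients.

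The combinatorial core is to read the relevant data off the join graph. Distinct maximal subgroups are always adjacent, so they form a clique, and a maximal subgroup $M$ fails to be adjacent to a vertex $N$ exactly when $N\le M$; this lets me recover the maximal subgroups, their indices, and the partition of the clique according to the prime dividing the index. Matching this data across the isomorphism I would obtain $\pi(H)=\pi(G)$ and, for each prime $p_i$, identify the normal subgroup obtained by intersecting the maximal subgroups of index $p_i$; these should assemble into a direct decomposition $H=H_1\times\cdots\times H_k$ with factors of pairwise coprime order, where $H_i$ is supported at $p_i$ and has join graph isomorphic to that of $E_i$. It then remains to identify each $H_i$: a group whose subgroup lattice equals that of an elementary abelian group is, by the theory of $P$-groups, either elementary abelian or a $P$-group in the sense defined above.

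I expect the last point to be the main obstacle. The join graph is genuinely weaker than the full subgroup lattice, so one must still exclude supersoluble groups with a ``non-scalar'' action — for instance $(C_p\times C_p)\rtimes C_q$ where $C_q$ acts with two distinct eigenvalues, which is neither a $P$-group nor a coprime direct product — that might otherwise share the join graph with the abelian group $G$. The way I would settle this is by a counting argument: the number of subgroups invariant under the complement, and hence the vertex counts, degree sequence and clique sizes of the join graph, differ between a power-automorphism (scalar) action and any other action; forcing these numerical invariants to agree with those of the abelian group $G$ leaves only the elementary abelian and $P$-group possibilities for each factor $H_i$.
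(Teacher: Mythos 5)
The paper does not actually prove this statement itself --- it is quoted from \cite[Proposition 6]{Lucc}, with only the indication that the proof combines Lemma~\ref{lem:frat} with lattice-theoretic properties --- so your proposal must stand on its own, and it does not: its central combinatorial step is false. You claim that from the abstract join graph one can recover the maximal subgroups \emph{together with their indices}, deduce $\pi(H)=\pi(G)$, and then intersect the maximal subgroups of index $p_i$ to assemble factors $H_i$. The paper's own counterexample to Question~\ref{q1} for the join graph already refutes this: take $G=C_p\times C_p$ and $H=D_{2p}$ with $p$ an odd prime. Both groups have trivial Frattini subgroup, and both join graphs consist of a clique $K_{p+1}$ (on the maximal subgroups) plus one isolated vertex; yet $\pi(G)=\{p\}\neq\{2,p\}=\pi(H)$, and inside the join graph of $H$ the index-$2$ maximal subgroup $C_p$ and the $p$ index-$p$ maximal subgroups of order $2$ are interchanged by graph automorphisms (the clique is vertex-transitive), so no graph-theoretic procedure can distinguish their indices. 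Consequently the partition of the clique by index, the equality $\pi(H)=\pi(G)$, and the normal subgroups ``obtained by intersecting the maximal subgroups of index $p_i$'' are simply not definable from the graph, and the proposed decomposition with each $H_i$ ``supported at $p_i$'' collapses; it even aims at the wrong target, since the factors in the conclusion may be $P$-groups, which involve two primes --- this is precisely why $P$-groups occur in the statement at all.

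The two remaining hard points are also left unproven, so even the salvageable skeleton (Theorem~\ref{join.H.super} plus Lemma~\ref{lem:frat}, ending with Baer's classification of groups lattice-isomorphic to elementary abelian groups) does not close. The assertion that all Sylow subgroups of $H$ are elementary abelian is deferred to ``an induction on $|H|$ combined with the join graph comparison'': the induction is never set up (as you yourself note, $\Phi(H)=1$ need not pass to quotients, and although the join graph of a quotient is an induced subgraph of the join graph, there is no way to locate it inside the abstract graph), and it leans on the refuted step above. Finally, the point you correctly identify as the main obstacle --- that a factor whose \emph{join graph}, rather than subgroup lattice, matches that of an elementary abelian group must be elementary abelian or a $P$-group --- is only gestured at through an unexecuted counting argument; Baer's theorem needs a lattice isomorphism as input, and your argument never manufactures one from the graph isomorphism. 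Since this graph-versus-lattice gap is exactly where the difficulty of \cite[Proposition 6]{Lucc} lies, the proposal does not constitute a proof.
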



\vskip 0.4 true cm

\begin{center}{\textbf{Acknowledgments}}
\end{center}
The authors are partially supported by the ``National Group for Algebraic and Geometric Structures, and their Applications" (GNSAGA - INdAM).
\vskip 0.4 true cm

\bibliographystyle{amsplain}
\bibliography{booksVC}

\vspace{1cm}

{\footnotesize \pn{\bf Valentina Grazian}\; \\ {Dipartimento di Matematica e Applicazioni}, {Universit\`a di Milano-Bicocca,} {Milano, Italy}\\
{\tt Email: valentina.grazian@unimib.it}\\

{\footnotesize \pn{\bf Andrea Lucchini}\; \\ {Dipartimento di Matematica}, {Universit\`a di Padova,} {Padova, Italy}\\
{\tt Email: lucchini@math.unipd.it}\\

{\footnotesize \pn{\bf Carmine Monetta}\; \\ {Dipartimento di Matematica}, {Universit\`a di Salerno,} {Salerno, Italy}\\
{\tt Email: cmonetta@unisa.it}\\

\end{document}